\newtheorem{theorem}{Theorem}[section]
\newtheorem{proposition}[theorem]{Proposition}
\newtheorem{corollary}[theorem]{Corollary}
\theoremstyle{definition}
\newtheorem{definition}[theorem]{Definition}
\newtheorem{example}[theorem]{Example}
\newcommand{\newword}[1]{\textit{#1}}
\newcommand{\N}{\mathbb{N}}
\newcommand{\sub}{\mathcal{D}}
\renewcommand{\L}{\mathsf{L}}
\newcommand{\R}{\mathsf{R}}
\newcommand{\rat}{\mathcal{R}}
\newcommand{\alp}{\mathcal{A}}
\newcommand{\leta}{a}
\newcommand{\letb}{a'}
\newcommand{\letter}{a}
\newcommand{\T}{\mathcal{T}}
\newcommand{\tape}{\tau}
\newcommand{\tapeother}{\tau'}
\newcommand{\states}{\mathcal{S}}
\newcommand{\state}{s}
\newcommand{\stateother}{s'}
\newcommand{\Z}{\mathbb{Z}}
\newcommand{\trans}{T}
\newcommand{\transfunc}{\tau}
\newcommand{\enc}{\epsilon}
\newcommand{\set}[2]{\{ #1 \mid #2 \}}
\newcommand{\bigset}[2]{\bigl\{ #1 \;\bigr|\; #2 \bigr\}}
\newcommand{\halt}{\mathcal{H}}
\newcommand{\haltinv}{\overline{\mathcal{H}}\rule{0pt}{0pt}}
\newcommand{\fT}{f_{\scriptscriptstyle\T}}
\begin{document}

\title[Some Undecidability Results]{Some Undecidability Results for Asynchronous Transducers and the Brin-Thompson Group~$2V$}

\author{James Belk}

\author{Collin Bleak}

\begin{abstract}Using a result of Kari and Ollinger, we prove that the torsion problem for elements of the Brin-Thompson group~$2V$ is undecidable.  As a result, we show that there does not exist an algorithm to determine whether an element of the rational group $\mathcal{R}$ of Grigorchuk, Nekrashevich, and Sushchanskii has finite order.  A modification of the construction gives other undecidability results about the dynamics of the action of elements of~$2V$ on Cantor Space.  Arzhantseva, Lafont, and Minasyanin prove in 2012 that there exists a finitely presented group with solvable word problem and unsolvable torsion problem. To our knowledge, $2V$ furnishes the first concrete example of such a group, and gives an example of a direct undecidability result in the extended family of R.~Thompson type groups.
\end{abstract}

\maketitle

\section{Introduction}

If $G$ is a finitely presented group, the \newword{torsion problem} for $G$ is the problem of deciding whether a given word in the generators represents an element of finite order in~$G$.  Like the word and conjugacy problems, the torsion problem is not solvable in general~\cite{BBN}.  Perhaps more surprising is the fact that there exist finitely presented groups with solvable word problem and unsolvable torsion problem.  This result was proven by Arzhantseva, Lafont, and Minasyanin in 2012~\cite{ALM}, but they did not give a specific example of such a group.

In the 1960's, Richard J.~Thompson introduced a family of three groups $F$, $T$, and $V$, which act by homeomorphisms on an interval, a circle, and a Cantor set, respectively.  These groups have a remarkable array of properties: for example, $T$ and~$V$ were among the first known examples of finitely presented infinite simple groups.  Though Thompson and McKenzie used $F$ to construct new examples of groups with unsolvable word problem~\cite{ThompsonMcKenzie}, the groups $F$, $T$, and $V$ themselves have solvable word problem, solvable conjugacy problem, and solvable torsion problem~(see~\cite{BeMa}).

In 2004, Matt Brin introduced a family $\{nV\}_{n=1}^\infty$ of Thompson like groups, where $1V=V$ \cite{Brin}.  Each group $nV$ acts by piecewise-affine homeomorphisms on the direct product of $n$ copies of the middle-thirds Cantor set.  {These groups are all simple~\cite{Brin3} and finitely presented~\cite{Brin2,HennigMatucci}, and indeed they have type~$F_\infty$~\cite{BritaEtAl,FluchEtAl}. It follows from Brin's work that they all have solvable word problems.}  Our first main result is the following.

\begin{theorem}\label{thm:MainTheorem1}For $n\geq 2$, the Brin-Thompson group $nV$ has unsolvable torsion problem.
\end{theorem}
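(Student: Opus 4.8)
The plan is to reduce from the undecidability of periodicity for reversible Turing machines — which I take to be the Kari--Ollinger result invoked in the abstract — by attaching to each such machine $M$ an element $g_M \in 2V$, computable from $M$, whose action on $C \times C$ copies the global transition map $T_M$ of $M$ on a clopen ``legal'' part and is trivial elsewhere; then $g_M$ has finite order in $2V$ precisely when $M$ is periodic, so a torsion algorithm for $2V$ would decide periodicity, a contradiction.

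For the encoding I would view $2V$ as the group of piecewise prefix-replacement homeomorphisms of $\Gamma^{\mathbb N} \times \Gamma^{\mathbb N}$, where $\Gamma$ is the tape alphabet of $M$ (after coding $M$ in binary we may assume $\Gamma = \{0,1\}$, so this is literally Brin's $2V$ on $2^{\mathbb N}\times 2^{\mathbb N}$). A configuration of $M$ — a bi-infinite tape with a head and a state — is represented by cutting the tape at the head between the two Cantor factors and recording the finite state as a fixed-length block of \emph{tape} symbols sitting against the head in the second factor. Recording the state this way, rather than hanging a state-label on every cell, is the crucial point: the latter would leave an infinite reservoir of inert symbols strung along the tape for the moving head to drag about indefinitely, creating infinite orbits of $g_M$ that have no counterpart in $M$; with the block encoding the only thing carried along is the state block itself, and every point of $\Gamma^{\mathbb N}\times\Gamma^{\mathbb N}$ carrying a valid state block is a genuine configuration. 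One step of $M$ then becomes a piecewise prefix replacement: a right-moving transition writes the scanned cell, prepends that symbol to the first factor, and rewrites the prefix ``(state block)(scanned cell)'' of the second factor as ``(new state block)''; a left-moving transition does the mirror move, pulling a symbol back from the first factor. This is finitely many rectangle pieces; the source rectangles plainly tile $C\times C$, and reversibility of $M$ is exactly what makes the image rectangles tile as well, so $g_M$ is a genuine element of $2V$. If $|Q|$ is not a power of $|\Gamma|$ there is a clopen $g_M$-invariant set of ``illegal state block'' points; on it I would set $g_M$ to the identity.

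Granting the construction, the legal configurations form a $g_M$-invariant clopen set on which $g_M$ is conjugate (via the obvious identification with the configuration space of $M$) to $T_M$, and $g_M$ is the identity off it, so $\mathrm{ord}(g_M)=\mathrm{ord}(T_M)$ in $\mathbb N\cup\{\infty\}$. It then remains to match ``$T_M$ has finite order'' with ``$M$ is periodic'': one direction is immediate, and for the other I would use compactness of the configuration space together with the fact that, for the reversible machines coming out of Kari and Ollinger's reduction, ``$T_M^{\,N}=\mathrm{id}$'' is a clopen condition — so a countable clopen cover of a compact space has a finite subcover — or, equivalently, the periodic-versus-immortal dichotomy their reduction is designed to produce. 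Finally the case $n>2$ is immediate: let $g_M$ act on the first two Cantor factors as above and trivially on the remaining $n-2$; this is an element of $nV$, still computable from $M$, with the same order as before.

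The part I expect to be the real work is making the encoding tight enough that $g_M$ has no periodic-or-aperiodic orbits beyond those already present in $T_M$ — any such slack (a genuinely infinite-order $g_M$ built from a periodic $M$) would wreck the reduction, and the state-as-tape-block device is exactly what is supposed to prevent it — together with verifying that the rectangle pieces really do assemble into an element of $2V$ and pinning down the precise form of the Kari--Ollinger theorem so that periodicity and finiteness of order coincide for the machines in play.
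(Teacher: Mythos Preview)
Your proposal is correct and follows essentially the same route as the paper: encode configurations of a complete reversible Turing machine into $C\times C$ by splitting the bi-infinite tape at the head between the two Cantor factors and recording the state as a prefix of one factor, so that the global transition function becomes a prefix-replacement homeomorphism in $2V$, and then invoke Kari--Ollinger.

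The paper's encoding is slightly cleaner than yours in a way that dissolves several of your stated worries. Instead of fixed-length state blocks over the tape alphabet (which leave an ``illegal'' clopen region, force you to set $g_M$ to the identity there, and push you toward a separate compactness or immortality-dichotomy argument), the paper chooses an arbitrary dyadic subdivision $\{I(\sigma_1),\dots,I(\sigma_m)\}$ of $C$ indexed by the states and another $\{I(\alpha_1),\dots,I(\alpha_n)\}$ indexed by the tape symbols --- complete binary prefix codes of exactly the right cardinalities. The configuration encoding $\Phi(\state_i,\tape)=\bigl(\sigma_i\,\enc(\tape_L),\,\enc(\tape_R)\bigr)$ is then a \emph{bijection} onto all of $C^2$, so $f_{\T}=\Phi\circ F\circ\Phi^{-1}$ is literally conjugate to the transition function $F$, and finite order of $f_{\T}$ coincides with uniform periodicity of $\T$ by definition. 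There is no illegal region to manage, no need to argue that pointwise periodicity implies a uniform bound, and no preliminary reduction of the tape alphabet to $\{0,1\}$.
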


It is easy to show that $mV$ embeds in $nV$ for $m\leq n$, so it suffices to prove this theorem in the case where~$n=2$.  Our strategy is to use elements of $2V$ to simulate the operation of certain Turing machines.  In 2008, Kari and Ollinger proved \cite{KaOl} that there does not exist an algorithm to determine whether a given complete, reversible Turing machine has uniformly periodic dynamics on its configuration space.  We show that every such machine is topologically conjugate to an (effectively constructible) element of~$2V$, and therefore there does not exist an algorithm to determine whether a given element of $2V$ has finite order.

Our second main result concerns the periodicity problem for asynchronous transducers.  Roughly speaking, a \newword{asynchronous transducer} is a finite-state automaton that converts an input string of arbitrary length to an output string.  The transducer reads one symbol at a time, changing its internal state and outputting a finite sequence of symbols at each step.  Asynchronous transducers are a natural generalization of \newword{synchronous transducers}, which are required to output exactly one symbol for every symbol read.

Every transducer defines a \newword{rational function}, which maps the space of infinite strings to itself.  A transducer is \newword{invertible} if this function is a homeomorphism.  The group of all rational functions defined by invertible transducers is the \newword{rational group}~$\mathcal{R}$ defined by Grigorchuk, Nekrashevych, and Sushchanskii~\cite{GNS2000}.  Subgroups of $\mathcal{R}$ are known as \newword{automata groups}.

The idea of groups of homeomorphisms defined by transducers has a long history. Al\v{e}sin \cite{Aleshin72} uses such a group to provide a counterexample to the unbounded Burnside conjecture. Later, Grigorchuk uses automata groups to provide a 2-group counterexample to the Burnside conjecture \cite{Grigorchuk80}, and to construct a group of intermediate growth, settling a well-known question of Milnor \cite{Grigorchuk83}. In the last decade, the work of Bartholdi, Grigorchuk, Nekrashevich, Sidki, S\v{u}n\'ic, and many others have advanced the theory of automata groups considerably, and have brought these groups to bear on problems in geometric group theory, complex dynamics, and fractal geometry.

A transducer is \newword{periodic} if some iterate of the corresponding rational function is equal to the identity.  Our second main theorem is the following.

\begin{theorem}\label{thm:MainTheorem2}There does not exist an algorithm to determine whether a given asynchronous transducer is periodic.
\end{theorem}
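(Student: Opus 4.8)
The plan is to reduce the torsion problem for $2V$ --- unsolvable by Theorem~\ref{thm:MainTheorem1} --- to the periodicity problem for asynchronous transducers. Model the Cantor set as $\mathfrak{C}=\{0,1\}^{\N}$ and fix the bit-interleaving homeomorphism $h\colon\mathfrak{C}\times\mathfrak{C}\to\mathfrak{C}$ given by $h\bigl((x_1x_2\cdots),(y_1y_2\cdots)\bigr)=x_1y_1x_2y_2\cdots$. Conjugation by $h$ turns the action of $2V$ on $\mathfrak{C}\times\mathfrak{C}$ into an action on $\mathfrak{C}$ by homeomorphisms, and since $h$ is fixed, $(hgh^{-1})^k=h\,g^k\,h^{-1}$, so $hgh^{-1}$ has finite order in $\mathrm{Homeo}(\mathfrak{C})$ exactly when $g$ has finite order in $2V$. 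It therefore suffices to give an algorithm that, from a table for $g\in 2V$ (a pair of dyadic rectangular subdivisions of $\mathfrak{C}\times\mathfrak{C}$ and a bijection between their pieces), outputs an asynchronous transducer $\T_g$ computing $\phi_g:=hgh^{-1}$: such a $\T_g$ is automatically invertible (its function is a homeomorphism, so $\phi_g\in\mathcal{R}$), and ``$\T_g$ is periodic'' means precisely ``$\phi_g$ has finite order'', hence ``$g$ has finite order in $2V$''. A decision procedure for transducer periodicity would then yield one for torsion in $2V$, a contradiction. (This simultaneously shows that $2V$ --- and, with $n$-fold interleaving, each $nV$ --- embeds effectively in $\mathcal{R}$, whence the rational group itself has unsolvable torsion problem.)

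To construct $\T_g$, have the transducer read the interleaved input $\sigma_1\sigma_2\cdots$ and de-interleave it on the fly, the odd-position symbols forming a candidate first coordinate $x=x_1x_2\cdots$ and the even-position symbols a candidate second coordinate $y=y_1y_2\cdots$. Since the domain rectangles $C_{u_i}\times C_{v_i}$ partition $\mathfrak{C}\times\mathfrak{C}$, there is a fixed $N$, computable from the table and chosen generously, such that the prefixes $x_1\cdots x_N$ and $y_1\cdots y_N$ determine the rectangle $R_i$ containing the point; during its first $2N$ reads $\T_g$ emits nothing and merely records these symbols in its (finite) state. Once $R_i$ is identified, $g$ acts on $R_i$ by the prefix substitution $(u_i x',v_i y')\mapsto(u_i' x',v_i' y')$, so $\phi_g$ must emit the interleaving of $u_i'$ with $v_i'$ and then re-interleave the tails $x'$ and $y'$ onto the remaining output positions. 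The crucial point is that $2|u_i|$, $2|v_i|$, $2|u_i'|$, $2|v_i'|$ are all even, so a tail symbol read at an odd input position is emitted at an odd output position shifted by the constant even amount $2(|u_i'|-|u_i|)$, and a tail symbol read at an even input position is emitted at one shifted by $2(|v_i'|-|v_i|)$; since in steady state output is produced at the same rate as input is consumed, and the generous head start of up to $2N$ buffered symbols absorbs any initial deficit, each such routing is realised by a buffer of bounded length, hence by finitely many states. Splicing the finite ``startup'' automaton to the finitely many bounded-buffer ``steady-state'' automata (one per $R_i$) produces the finite transducer $\T_g$, and every step of this construction is manifestly algorithmic in the table for $g$.

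The step I expect to demand the most care is this last, bounded-buffer claim: one must verify, uniformly over the finitely many rectangles of $g$ and including the transitional stretches where $u_i'$ or $v_i'$ outlasts the other, that no tail symbol is ever scheduled for output before it is read and that none lingers in the buffer for more than a bounded number of steps. This is exactly where the evenness of the prefix lengths --- equivalently, the preservation of the odd/even ``phase'' of the de-interleaved tails under $h$ --- does the real work. The remaining obligations --- that $h$ is a homeomorphism, that $g\mapsto hgh^{-1}$ is an order-preserving injection of $2V$ into $\mathcal{R}$, and that $\T_g$ really does compute $\phi_g$ on all of $\mathfrak{C}$ (it never stalls, since $\phi_g$ returns an infinite string on every input) --- are routine bookkeeping.
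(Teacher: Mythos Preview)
Your proposal is correct and follows essentially the same strategy as the paper: reduce the torsion problem for $2V$ (Theorem~\ref{thm:MainTheorem1}) to transducer periodicity by exhibiting an effective embedding of $2V$ into the rational group via a coordinate-interleaving homeomorphism. The one noteworthy difference is that the paper works over the four-letter alphabet $\{00,01,10,11\}$, so that each input symbol already carries one bit of each coordinate; this makes the prefix-replacement maps $\mu_{\alpha,\beta}^\pi$ realizable by very simple shift-register transducers (Proposition~\ref{prop:TransducersForParts}) and completely sidesteps the parity/buffer bookkeeping you correctly identify as the delicate part of your two-letter construction.
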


We prove this result by showing that every element of the group $2V$ is topologically conjugate to a rational function defined by a transducer.  Since the torsion problem in $2V$ is undecidable, Theorem~\ref{thm:MainTheorem2} follows.

One important problem in the theory of automata groups is the \textit{finiteness problem}: given a finite collection of invertible transducers, is it possible to determine whether the corresponding rational homeomorphisms generate a finite group?  This question was posed by Grigorchuk, Nekrashevych, and Sushchanskii in~\cite{GNS2000}, and has since received significant attention in the literature. {Gillibert \cite{Gillibert2013} proved that it is undecidable whether the semigroup of rational functions generated by a given collection of (not necessarily invertible) transducers is finite, which our result implies as well.  Akhvai, Klimann, Lombardy, Mairesse and Picantin \cite{AkhaviKlimannLombardyMairessePicantin2012}, Klimann~\cite{Klimann2012}, and Bondarenko, Bondarenko,  Sidke, and Zapata \cite{BondarenkoBondarenkoSidkiZapata2013} have also obtained partial decidability or undecidability results in various contexts.}
Our result settles the question for asynchronous transducers.

\begin{theorem}\label{thm:MainTheorem3}The finiteness problem for groups generated by asynchronous automata is unsolvable.
\end{theorem}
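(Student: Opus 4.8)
The plan is to obtain Theorem~\ref{thm:MainTheorem3} as a direct corollary of Theorem~\ref{thm:MainTheorem2}, using the elementary observation that a single invertible transducer generates a finite group precisely when it is periodic. Indeed, let $\T$ be an invertible transducer with associated rational homeomorphism $\fT$. The cyclic group $\langle\fT\rangle\leq\mathcal{R}$ is finite if and only if $\fT$ has finite order, i.e.\ if and only if some iterate $\fT^{\,k}$ equals the identity, which is exactly the assertion that $\T$ is periodic. (Conversely, periodicity of a transducer forces invertibility: if $\fT^{\,k}=\mathrm{id}$ with $k\geq 1$ then $\fT^{\,k-1}$ inverts $\fT$, so a periodic transducer is automatically an automaton.) Hence, for an invertible $\T$, the group generated by the one-element collection $\{\T\}$ is finite if and only if $\T$ is periodic.

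First I would recall the construction behind Theorem~\ref{thm:MainTheorem2}: it produces, from an element of $2V$, an asynchronous transducer that is topologically conjugate to that element by a homeomorphism of Cantor space, hence invertible, and that is periodic if and only if the corresponding element of $2V$ has finite order. Combining this with the undecidability of the torsion problem for $2V$ (Theorem~\ref{thm:MainTheorem1}), we see that there is no algorithm deciding periodicity even when the input is restricted to this family of genuinely invertible transducers. By the equivalence above, there is therefore no algorithm that decides, given one such transducer $\T$, whether the group generated by the singleton $\{\T\}$ is finite. A singleton is a finite collection of automata, so this is a special case of the finiteness problem, which is consequently unsolvable.

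The proof is short, so the only real issue is a point of bookkeeping: the inputs to the finiteness problem must be \emph{automata}, i.e.\ invertible transducers, so the reduction has to supply transducers that are known to be invertible. This is precisely why we reduce from the subfamily of transducers arising in the proof of Theorem~\ref{thm:MainTheorem2}, which are invertible by construction, rather than from arbitrary transducers; once that is observed, no further work is required. Alternatively, one can run the reduction directly from Theorem~\ref{thm:MainTheorem1}, effectively converting a word in the generators of $2V$ into the conjugate transducer and asking whether that single automaton generates a finite (cyclic) group.
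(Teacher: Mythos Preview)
Your argument is correct and matches the paper's own proof, which simply notes that Theorem~\ref{thm:MainTheorem3} follows immediately from Theorem~\ref{thm:MainTheorem2} applied to the cyclic group generated by a single asynchronous automaton. Your additional care in checking that the transducers produced in the proof of Theorem~\ref{thm:MainTheorem2} are genuinely invertible (so that they qualify as inputs to the finiteness problem) is a worthwhile point the paper leaves implicit.
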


This follows immediately from Theorem~\ref{thm:MainTheorem2}, which states that no such algorithm exists for the cyclic group generated by a single asynchronous automaton.  Note that the finiteness problem is still open for groups generated by synchronous automata, which includes all Grigorchuk groups, branch groups, iterated monodromy groups, and self-similar groups.

In the last section, we show how to simulate arbitrary Turing machines using elements of $2V$, and we use the construction to prove some further undecidability results for the dynamics of elements.  For example, we prove that there exists an element of $f\in2V$ with an attracting fixed points such that the basin of the fixed point is a noncomputable set.

It is an open question whether the group $2V$ has a solvable conjugacy problem, and our result does not settle the issue.  However, it does seem clear that the conjugacy problem in $2V$ must be considerably harder than in Thompson's group~$V$.  In particular, there can be no conjugacy invariant for $2V$ that gives a complete description of the dynamics, for it is not even possible to detect whether the dynamics are periodic!  This contrasts sharply with Thompson's group~$V$, in which such an invariant is easy to compute (see~\cite{BeMa}).

Based on this result, it seems likely that the conjugacy problem in $2V$ is undecidable.  If this is indeed the case, the group $2V$ may be useful for public-key cryptography~\cite{AAG,KoLee}.

\section{Turing Machines}

In this section we define Turing machines, reversible Turing machines, and complete reversible Turing machines.  Our treatment here is very similar to the one in~\cite{KaOl}, which is in turn based on the treatments in \cite{Kur} and~\cite{Mor}.

For the following definition, we fix two symbols $\L$ (for \newword{left}) and $\R$ (for \newword{right}), representing the two types of movement instructions for a Turing machine.

\begin{definition}A \newword{Turing machine} is an ordered triple $(\states,\alp,\trans)$, where
\begin{itemize}
\item $\states$ is a finite set of \newword{states},\smallskip
\item $\alp$ is a finite alphabet of \newword{tape symbols}, and\smallskip
\item $\trans\subseteq (\states\times\{\L,\R\}\times \states) \cup (\states\times\alp\times \states\times\alp)$ is the \newword{transition table}.
\end{itemize}
\end{definition}

A \newword{tape} for a Turing machine $\T=(\states,\alp,\trans)$ is any function $\tape\colon\Z\to\alp$, i.e.~any element of~$\alp^\Z$.  A \newword{configuration} of a Turing machine is a pair $(\state,\tape)$, where $\state$ is a state and $\tape$ is a tape.  The set $\states\times\alp^\Z$ of all configurations is called the \newword{configuration space} for~$\T$.

Each element of the transition table $T$ is called an \newword{instruction}.  There are two types of instructions:
\begin{enumerate}
\item An instruction $(\state,\delta,\stateother)\in \states\times\{\L,\R\}\times\states$ is called a \newword{move instruction}, with \newword{initial state}~$\state$, \newword{direction}~$\delta$, and \newword{final state}~$\stateother$.\smallskip
\item An instruction $(\state,\leta,\stateother,\letb)\in \states\times\alp\times\states\times\alp$ is called a \newword{write instruction}, with \newword{initial state}~$\state$, \newword{read symbol}~$\leta$, \newword{final state}~$\stateother$, and \newword{write symbol}~$\letb$.
\end{enumerate}
Together, the instructions of $T$ define a \newword{transition relation}~$\to$ on the configuration space~$\states\times\alp^\Z$.

Specifically, let $W\colon\alp^\Z\times\alp \to \alp^\Z$ and $M \colon \alp^\Z\times\{\L,\R\}\to\alp^\Z$ be the functions defined by
\[
W(\tape,\leta)(n) \;=\; \begin{cases}\leta & \text{if }n = 0, \\ \tape(n) & \text{if }n \ne 0,\end{cases}
\quad\text{and}\quad
M(\tape,\delta)(n) \;=\; \begin{cases}\tape(n-1) & \text{if }\delta = \L, \\ \tape(n+1) & \text{if }\delta = \R,\end{cases}
\]
for all $n\in\Z$.  That is, $W$ is the function that writes a symbol on the tape at position~$0$, while $M$ is the function that moves the head left or right by one step.  Using these functions, we can define the transition relation $\to$ for configurations:
\begin{enumerate}
\item Each move instruction $(\state,\delta,\stateother)\in T$ specifies that
 \[
 (\state,\tape) \;\to\; \bigl(\stateother,M(\tape,\delta)\bigr)
 \]
 for every tape $\tape\in\alp^\Z$.\smallskip
\item Each write instruction $(\state,\leta,\stateother,\letb)\in T$ specifies that
\[
(\state,\tape) \;\to\; \bigl(\stateother,W(\tape,\letb)\bigr)
\]
for every tape $\tape\in\alp^\Z$ for which $\tape(0) = \leta$.
\end{enumerate}
This completes the definition of $\to$, as well as the all of the basic definitions for general Turing machines.

We are interested in certain kinds of Turing machines:

\begin{definition}Let $\T=(\states,\alp,\trans)$ be a Turing machine.
\begin{enumerate}
\item We say that $\T$ is \newword{deterministic} if for every configuration $(\state,\tape)$, there is at most one configuration $(\stateother,\tapeother)$ so that $(\state,\tape)\to(\stateother,\tapeother)$.\smallskip
\item We say that $\T$ is \newword{reversible} if $\T$ is deterministic and for every configuration $(\stateother,\tapeother)$, there is at most one configuration $(\state,\tape)$ such that $(\state,\tape)\to(\stateother,\tapeother)$.\smallskip
\item We say that $\T$ is \newword{complete} if for every configuration $(\state,\tape)$, there is at least one configuration $(\stateother,\tapeother)$ such that $(\state,\tape)\to(\stateother,\tapeother)$.  (That is, $\T$ is complete if it has no halting configurations.)
\end{enumerate}
\end{definition}

Though we have defined these conditions using the configuration space $\states\times\alp^\Z$, they can be checked directly from the transition table~$\trans$ (see~\cite{KaOl}).

\begin{proposition}If\/ $\T$ is a complete, reversible Turing machine, then the transition relation $\to$ is a bijective function on the configuration space~$\states\times\alp^\Z$.
\end{proposition}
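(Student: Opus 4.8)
The plan is to show three things about the relation $\to$ under the hypotheses: (i) it is a function (single-valued), (ii) it is total (defined everywhere), and (iii) it is injective and surjective. Claims (i) and (ii) are exactly the hypotheses ``deterministic'' and ``complete'' unwound: determinism says each $(\state,\tape)$ has at most one image, completeness says it has at least one, so together $\to$ assigns to each configuration exactly one configuration, i.e.\ it is a well-defined function $\states\times\alp^\Z \to \states\times\alp^\Z$. For injectivity, reversibility says each target configuration has at most one preimage, which is precisely the statement that the function is injective. So the only real content is surjectivity.

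First I would observe that the two elementary operations are invertible as maps of tapes. The move map is a genuine shift: for each direction $\delta$, the map $\tape \mapsto M(\tape,\delta)$ is a bijection of $\alp^\Z$, with inverse $M(\,\cdot\,,\R)$ if $\delta=\L$ and $M(\,\cdot\,,\L)$ if $\delta=\R$ (one checks $M(M(\tape,\L),\R)=\tape$ directly from the definition). The write map $\tape \mapsto W(\tape,\letb)$ is not a bijection on all of $\alp^\Z$, but when restricted to tapes with $\tape(0)=\leta$ it is a bijection onto tapes with value $\letb$ at position $0$, again with an explicit inverse. Thus each instruction of $\trans$ defines a partial bijection between explicitly describable ``cylinder'' subsets of the configuration space.

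Next I would argue surjectivity by a counting/pigeonhole argument on the state component. Since $\to$ is an injective function on $\states\times\alp^\Z$, its image misses a configuration $(\stateother,\tapeother)$ only if that configuration has no preimage; I want to rule this out. The key point is that reversibility, checked at the level of the transition table as in~\cite{KaOl}, forces the instructions to partition the configuration space nicely: for each final state $\stateother$, the instructions of $\trans$ with final state $\stateother$ cover all configurations with state $\stateother$ in their images, and disjointly (otherwise two distinct preimages would collide). Dually, completeness and determinism say the instructions with a given initial state partition the configurations with that state into the domains on which they act. Running both partition statements together, the forward map sends the ``initial-state blocks'' bijectively onto the ``final-state blocks'' via the elementary bijections of the previous paragraph, so the union of the images is all of $\states\times\alp^\Z$.

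I expect the main obstacle to be the bookkeeping in the surjectivity step: one must verify that ``reversible'' as a condition on the finite table~$\trans$ really does yield the disjoint-covering property needed, rather than merely local injectivity, and handle the interaction between move instructions (which ignore the tape) and write instructions (which are gated by $\tape(0)=\leta$) at the same initial or final state. I would either cite the table-level characterization from~\cite{KaOl} that makes these partition properties transparent, or spell out the small case analysis: at a given state, the table either contains a single move instruction (and then that state's block is all tapes, mapped by a shift), or a collection of write instructions whose read symbols $\leta$ must be distinct and exhaustive over~$\alp$ for determinism plus completeness — and whose write symbols, together with final states, must likewise be arranged without collision for reversibility. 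Once this dichotomy is established, surjectivity, and hence the bijectivity of $\to$, follows.
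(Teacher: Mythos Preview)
Your proposal is correct and follows essentially the same approach as the paper: injectivity is immediate from determinism plus reversibility, and surjectivity comes from a counting argument on transitions between states. The paper's own proof is actually far terser than yours---it dispatches injectivity as ``clear'' and for surjectivity simply cites K\r{u}rka~\cite{Kur} for the counting argument, whereas you have sketched out the dichotomy (move state versus write state) and the weight-counting that makes the argument go through.
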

\begin{proof}It is clear that $\to$ defines an injective function.  The surjectivity follows from a simple counting argument on the transitions between states (see~\cite{Kur}).
\end{proof}

If $\T$ is a complete, reversible Turing machine, the bijection $F\colon \states\times\alp^\Z \to \states\times\alp^\Z$ defined by the transition relation is called the \newword{transition function} for~$\T$. We say that $\T$ is \newword{uniformly periodic} if there exists an $n\in\mathbb{N}$ such that $F^n$ is the identity function.  In~\cite{KaOl}, Kari and Ollinger prove the following theorem:

\begin{theorem}[Kari-Ollinger]
\label{thm:KariOllinger}It is undecidable whether a given complete, reversible Turing machine is uniformly periodic.\hfill\qedsymbol
\end{theorem}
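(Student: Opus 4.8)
The plan is to reduce from the \emph{immortality problem} for Turing machines. By Hooper's theorem there is no algorithm deciding, given an arbitrary Turing machine, whether it has an \newword{immortal configuration} --- a configuration from which it never halts; equivalently, it is undecidable whether a given Turing machine is \newword{mortal}, i.e.\ whether it halts starting from every configuration in $\states\times\alp^\Z$. The first step is to push this undecidability down to \emph{reversible} machines. Using the standard reversibilization of Bennett, Lecerf, and Morita, from any Turing machine $M$ one can effectively build a reversible Turing machine $R$ --- generally not complete --- whose forward computations simulate those of $M$, arranged so that every maximal orbit (\emph{chain}) of the partial injective transition map $F$ of $R$ is finite if and only if $M$ is mortal.

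The second step turns $R$ into a \emph{complete} reversible machine by a ``bouncing'' trick. Write $X=\states\times\alp^\Z$ for the configuration space, adjoin a direction bit to the state set so that the new configuration space is a Turing-encodable copy of $X\times\{+,-\}$, and define
\[
\widehat F(x,+)=\begin{cases}(F(x),+)&\text{if }F(x)\text{ is defined},\\[2pt](x,-)&\text{otherwise},\end{cases}
\qquad
\widehat F(x,-)=\begin{cases}\bigl(F^{-1}(x),-\bigr)&\text{if }F^{-1}(x)\text{ is defined},\\[2pt](x,+)&\text{otherwise}.\end{cases}
\]
Whether $F(x)$ or $F^{-1}(x)$ is defined depends only on the current state and the scanned symbol, hence can be read off the transition table, and each of $F$, $F^{-1}$ advances by one local step; so $\widehat F$ is the transition function of an effectively constructible machine $\widehat R$, and one checks directly from its transition table that $\widehat R$ is reversible and complete. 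By construction the $\widehat F$-orbit through a point runs along a maximal $F$-chain forwards, reflects at its right end, runs back along it, reflects at its left end, and repeats; this orbit is finite exactly when the underlying chain is finite, with period twice the chain's length.

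It remains to check that $\widehat R$ is uniformly periodic if and only if every $F$-chain of $R$ is finite --- equivalently, by the first step, if and only if $M$ is mortal. One implication is immediate: an immortal configuration of $M$ produces an infinite $F$-chain and hence an infinite $\widehat F$-orbit, so no power of $\widehat F$ is the identity. For the other implication one must extract a \emph{uniform} period, and the tool is compactness: for each $N$ the set of configurations of $R$ whose forward orbit halts within $N$ steps is clopen in the compact, totally disconnected space $X$ (having an outgoing transition is a cylinder condition, and $F$ is continuous where defined); these clopen sets increase with $N$, and if every forward orbit is finite their union is all of $X$, so it stabilizes at some finite $N$. Each finite chain has a leftmost configuration, whose forward orbit is the whole chain and therefore has length at most $N$; so every chain, and hence every $\widehat F$-orbit, has length at most $2N$, giving $\widehat F^{\,(2N)!}=\mathrm{id}$. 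Thus undecidability of mortality transfers to undecidability of uniform periodicity for complete reversible machines.

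I expect the main obstacle to be the first step. The reversibilization must keep the ``garbage'' configurations --- those not arising from a genuine computation of $M$ --- under control in \emph{both} time directions, so that neither $R$ nor its time-reverse acquires a spurious infinite chain, and it must turn a \emph{looping} computation of $M$ into a genuinely infinite (non-periodic) chain of $R$ by letting a history track grow monotonically. Arranging all of this carefully is essentially the strengthening of Hooper's theorem to reversible Turing machines, and it is the delicate part. By comparison the bouncing construction is routine, and the compactness argument, though short, is the conceptual heart of the matter: it is what makes \emph{uniform} periodicity, rather than the weaker ``every orbit is finite,'' the property that matches mortality and thus inherits undecidability.
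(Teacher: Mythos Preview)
The paper does not prove this theorem at all: it is quoted as an external result of Kari and Ollinger~\cite{KaOl}, stated with a terminal \qedsymbol\ and no argument. So there is no ``paper's own proof'' to compare against; everything downstream in the paper takes Theorem~\ref{thm:KariOllinger} as a black box.

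That said, your proof plan is a faithful outline of what Kari and Ollinger actually do in~\cite{KaOl}: reduce from Hooper's undecidability of immortality, pass to a reversible machine, complete it by the two-copy ``bounce'' construction, and invoke compactness of the configuration space to upgrade pointwise finiteness of orbits to a uniform period. Your identification of the delicate step is also correct: the standard Bennett--Lecerf reversibilization is tailored to a single initial configuration, whereas immortality quantifies over \emph{all} configurations, so one must control garbage configurations in both time directions. Kari and Ollinger handle this not by naively reversibilizing an arbitrary machine but by first establishing that immortality is already undecidable for \emph{reversible} machines (via a careful embedding construction), and only then applying the completion step. If you want to turn your plan into a self-contained proof, that is the part you would need to flesh out; the bouncing construction and the compactness argument are exactly as you describe.
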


We shall use this theorem to prove the undecidability of the torsion problem for elements of~$2V$.

\section{The Group $2V$}

In this section we define the Brin-Thompson group $2V$ and establish conventions for describing its elements.  Because we wish to view the Cantor set as the infinite product space $\{0,1\}^\infty$, our notation and terminology is slightly different from that of~\cite{Brin}.

Let $C$ be the Cantor set, which we identify with the infinite product space $\{0,1\}^\infty$, and let $\{0,1\}^*$ denote the set of all finite sequences of $0$'s and~$1$'s.  Given a finite sequence $\alpha\in\{0,1\}^*$, the corresponding \newword{dyadic interval} in $C$ is the set
\[
I(\alpha) \;=\; \bigset{\alpha\omega}{\omega\in \{0,1\}^\infty}.
\]
where $\alpha\omega$ denotes the concatenation of the finite sequence $\alpha$ with the infinite sequence~$\omega$.  Note that the Cantor set $C$ is itself a dyadic interval, namely the interval $I(-)$ corresponding to the empty sequence.  A \newword{dyadic subdivision} of the Cantor set $C$ is any partition of $C$ into finitely many dyadic intervals.

Let $C^2$ denote the Cartesian product $C\times C$.  A \newword{dyadic rectangle} in $C^2$ is any set of the form $R(\alpha,\beta) = I(\alpha)\times I(\beta)$, where $I(\alpha)$ and $I(\beta)$ are dyadic intervals.  A \newword{dyadic subdivision} of $C^2$ is any partition of $C^2$ into finitely many dyadic rectangles.  As discussed in~\cite{Brin}, every dyadic subdivision of $C^2$ has an associated \newword{pattern}, which is a subdivision of the unit square into rectangles.  For example, Figure~\ref{fig:SquarePatterns}(a) shows a dyadic subdivision of $C^2$ and the corresponding pattern.
\begin{figure}[t]
\centering
\setlength{\fboxsep}{8pt}
\subfloat[]{\includegraphics{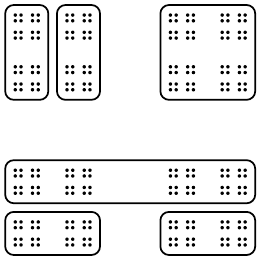}\quad\;\includegraphics{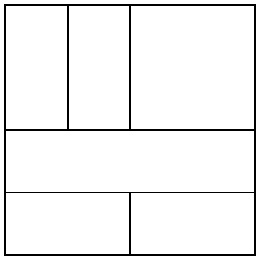}}
\hfill
\subfloat[]{\includegraphics{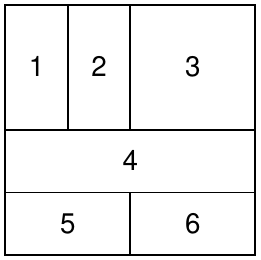}\quad\;\includegraphics{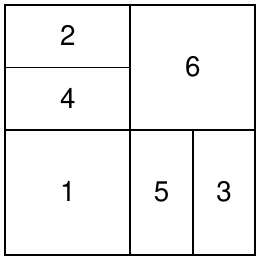}}
\caption{(a) A dyadic subdivision of $C^2$, and the corresponding pattern in the unit square. (b) A numbered pattern pair for an element of~$2V$.}
\label{fig:SquarePatterns}
\end{figure}

If $R(\alpha,\beta)$ and $R(\gamma,\delta)$ are dyadic rectangles, the \newword{prefix replacement function} $f\colon R(\alpha,\beta) \to R(\gamma,\delta)$ is the function defined by
\[
f(\alpha\psi,\beta\omega) \;=\; (\gamma\psi,\delta\omega),
\]
for all $\psi,\omega \in\{0,1\}^\infty$.  Note that this is a bijection between $R(\alpha,\beta)$ and $R(\gamma,\delta)$.

\begin{definition}The \newword{Brin-Thompson group $\boldsymbol{2V}$} is the group of all homeomorphisms $f\colon C^2\to C^2$ with the following property: there exists a dyadic subdivision $\sub$ of $C^2$ such that $f$ acts as a prefix replacement on each dyadic rectangle of~$\sub$.
\end{definition}

Note that the images $\{f(R) \mid R\in\sub\}$ of the rectangles of the dyadic subdivision $\sub$ are again a dyadic subdivision of~$C^2$.  Since there is only one prefix replacement mapping any dyadic rectangle to any other, an element of $2V$ is entirely determined by a pair of dyadic subdivisions, together with a one-to-one correspondence between the rectangles.  This lets us represent any element $f\in 2V$ by a pair of numbered patterns, as shown in Figure~\ref{fig:SquarePatterns}(b).

Note that the numbered pattern pair for an element $f\in 2V$ is not unique.  In particular, given any numbered pattern pair for~$f$, we can horizontally or vertically bisect a corresponding pair of rectangles in the domain and range to obtain another numbered pattern pair for~$f$.

Nonetheless, it is possible to compute effectively using numbered pattern pairs.  In~\cite{Brin}, Brin describes an effective procedure to compute a numbered pattern pair for a composition $fg$ of two elements of~$2V$, given a numbered pattern pair for each element.  Note that we can also effectively find a numbered pattern pair for the inverse of an element, simply by switching the numbered patterns for the domain and range.

\begin{proposition}Given numbered pattern pairs for two elements of~$2V$, there is an effective procedure to determine whether the two elements are equal.\label{prop:DetermineEqual}
\end{proposition}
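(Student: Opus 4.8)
The plan is to reduce the equality problem to the problem of recognizing the identity. Given numbered pattern pairs for $f$ and $g$, I would first apply Brin's composition procedure together with the inverse operation (switching the numbered patterns for the domain and range) to produce a numbered pattern pair for $h = fg^{-1}$; both of these steps are effective, as recalled just before the statement. Since $f = g$ if and only if $h$ is the identity, it then suffices to give an effective test deciding whether an element of $2V$ presented by a numbered pattern pair is the identity homeomorphism of $C^2$.

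For that test I would establish the following characterization: an element $h\in 2V$ given by a numbered pattern pair is the identity if and only if, in that pair, each numbered domain rectangle coincides with the range rectangle carrying the same label. One direction is immediate: if a domain rectangle $R(\alpha,\beta)$ equals its corresponding range rectangle, the prefix replacement assigned to it is the map $(\alpha\psi,\beta\omega)\mapsto(\alpha\psi,\beta\omega)$, which is the identity on $R(\alpha,\beta)$, and if this happens for every labelled pair then $h$ is the identity on all of $C^2$. For the converse, suppose $h$ is the identity. Each domain rectangle $R$ of the pattern pair satisfies $h(R) = R$, but by construction $h(R)$ is exactly the range rectangle sharing $R$'s label, so that range rectangle equals $R$. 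Here I would invoke the elementary fact that a dyadic rectangle $R(\alpha,\beta)$ determines the pair $(\alpha,\beta)$ uniquely (because $I(\gamma)=I(\gamma')$ forces $\gamma=\gamma'$), so "equal as subsets of $C^2$" and "equal as rectangles with identical defining words" coincide. The algorithm is then trivial: inspect the finitely many labelled rectangle pairs and check whether each domain word pair matches the correspondingly labelled range word pair.

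The only point requiring genuine care — and the part I expect to be the real content — is that this characterization must hold for an \emph{arbitrary} numbered pattern pair representing $h$, not merely for some normalized one, since the representation is not unique; this is precisely what the converse direction above handles, as $h(R)=R$ together with uniqueness of dyadic-rectangle representations forces each range rectangle regardless of how $h$ was presented. As an alternative route that sidesteps the inverse operation entirely, one can pass to a common dyadic refinement $\sub$ of the two domain subdivisions of $f$ and $g$ — such a refinement exists because the intersection of two dyadic rectangles is either empty or again a dyadic rectangle — then compute, for each rectangle $R\in\sub$, the prefix replacements by which $f$ and $g$ respectively act on $R$, and test whether $f(R)=g(R)$ for every $R\in\sub$; agreement of the two images on $R$ forces agreement of the two prefix replacements there, since there is exactly one prefix replacement between any ordered pair of dyadic rectangles. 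Either version yields the desired effective procedure.
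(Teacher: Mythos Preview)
Your proposal is correct and follows essentially the same approach as the paper: reduce to testing whether $fg^{-1}$ (the paper uses $f^{-1}g$, an inconsequential difference) is the identity, then observe that this holds precisely when every labelled domain rectangle coincides with its correspondingly labelled range rectangle. You supply more careful justification for why this identity test works on an \emph{arbitrary} numbered pattern pair than the paper does, and your common-refinement alternative is a valid variant, but the core argument is the same.
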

\begin{proof}Let $f$ and $g$ be the two elements.  Using Brin's procedure, we can find a numbered pattern pair for $f^{-1} g$.  Then $f=g$ if and only if $f^{-1}g$ is the identity element, which occurs if and only if the numbered domain and range patterns for $f^{-1} g$ are identical.
\end{proof}

It is shown in \cite{Brin2} that the group $2V$ is finitely presented, with $8$ generators and $70$ relations.  The following proposition is implicit in~\cite{Brin} and~\cite{Brin2}.

\begin{proposition}The word problem is solvable in~$2V$.
\end{proposition}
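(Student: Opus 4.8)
The plan is to reduce the word problem to Proposition~\ref{prop:DetermineEqual} by repeated application of Brin's composition procedure. First I would fix, once and for all, a finite presentation $\langle x_1,\dots,x_8 \mid r_1,\dots,r_{70}\rangle$ for $2V$ as in~\cite{Brin2}, together with an explicit numbered pattern pair $P_i$ representing each generator $x_i$ as a concrete homeomorphism of $C^2$ (such pattern pairs are exhibited in~\cite{Brin,Brin2}). By swapping the numbered domain and range patterns of $P_i$ one immediately obtains a numbered pattern pair representing $x_i^{-1}$, so that a numbered pattern pair is available for each of $x_1^{\pm1},\dots,x_8^{\pm1}$.

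Next, given an arbitrary word $w = y_1 y_2 \cdots y_k$ with each $y_j \in \{x_1^{\pm1},\dots,x_8^{\pm1}\}$, I would run Brin's effective composition algorithm $k-1$ times: starting from the numbered pattern pair for $y_1$, multiply on the right successively by the numbered pattern pairs for $y_2, y_3, \dots, y_k$. This produces, in finitely many steps, a numbered pattern pair $P_w$ representing the element of $2V$ defined by the word $w$. Finally, exactly as in the proof of Proposition~\ref{prop:DetermineEqual}, the element represented by $P_w$ is the identity of $2V$ if and only if its numbered domain pattern and numbered range pattern coincide, a condition that can be checked by direct inspection; one outputs ``$w=1$'' precisely in that case.

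The only step that is not purely bookkeeping is the availability of an algorithm to compose two elements of $2V$ presented by pattern pairs, and this is precisely what Brin establishes in~\cite{Brin} and what the excerpt has already granted us. I therefore do not expect any genuine obstacle: once Brin's composition procedure is taken as a black box, the verification above is routine, and the word problem for $2V$ is solved by combining it with the trivial test for the identity used in Proposition~\ref{prop:DetermineEqual}.
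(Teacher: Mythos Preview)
Your proposal is correct and is essentially the same argument as the paper's: build a numbered pattern pair for the given word by iterating Brin's composition procedure on the generators' pattern pairs, then invoke Proposition~\ref{prop:DetermineEqual} to test whether the result is the identity. The paper's proof is simply a terser statement of exactly this.
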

\begin{proof}Given any two words, we can use Brin's procedure to construct numbered pattern pairs for the corresponding elements.  By Proposition~\ref{prop:DetermineEqual}, we can use these to determine whether the elements are equal.
\end{proof}

We will also need the following result.

\begin{proposition}Given a numbered pattern pair for an element $f\in 2V$, there is an effective procedure to find a word for~$f$.
\label{prop:EffectiveWord}
\end{proposition}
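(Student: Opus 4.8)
The plan is to use a brute-force search through all words in a fixed finite generating set, relying on the effectivity of Brin's composition procedure together with Proposition~\ref{prop:DetermineEqual}. First I would fix once and for all the eight generators of~$2V$ from~\cite{Brin2}; each of these is an explicit element of~$2V$ and so comes with an explicit numbered pattern pair. Given the numbered pattern pair for the target element~$f$, I would enumerate the countably many words $w_1,w_2,\ldots$ in these eight generators and their inverses, and for each~$w_i$ apply Brin's procedure repeatedly to produce a numbered pattern pair for the element of~$2V$ represented by~$w_i$. I would then apply the procedure of Proposition~\ref{prop:DetermineEqual} to decide whether that element equals~$f$, and output the first~$w_i$ for which this test succeeds.

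Every step of this procedure is effective: the enumeration of words is effective, Brin's composition algorithm is effective (and a numbered pattern pair for the inverse of a generator is obtained simply by swapping its domain and range patterns, as noted above), and the equality test of Proposition~\ref{prop:DetermineEqual} is effective. The only thing left to verify is that the search terminates. Since the eight chosen elements generate~$2V$ and $f\in 2V$, there is at least one word~$w$ with $w=f$ in~$2V$; this word eventually appears in the enumeration, the equality test succeeds at that point, and the search halts, returning a word for~$f$.

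The main point to confirm is therefore just that the generating set is complete and that its elements are available to us in numbered-pattern-pair form, so that Brin's procedure can be run on them; both facts are furnished by~\cite{Brin2}. I do not expect a genuine obstacle here: the proposition is an instance of the general principle that in a finitely presented group with solvable word problem, any element presented in a form for which word-values can be effectively computed and compared can be effectively written as a word in the generators. One could alternatively give a more direct algorithm that strips off generators by induction on the number of rectangles of the pattern pair, but the naive search already suffices.
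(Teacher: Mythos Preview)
Your proposal is correct and follows essentially the same approach as the paper: enumerate words in the generators, use Brin's procedure to compute a numbered pattern pair for each, and compare with~$f$ via Proposition~\ref{prop:DetermineEqual} until a match is found. The paper's proof is terser but identical in substance; your additional remarks on termination and the availability of explicit generators are accurate elaborations.
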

\begin{proof}Given a word $w$, we can determine whether $w$ represents $f$ by computing a numbered pattern pair for~$w$, and then comparing with $f$ using Proposition~\ref{prop:DetermineEqual}.  Therefore, we need only search through all possible words $w$ until we find one that agrees with~$f$.
\end{proof}

{See \cite{BurilloClearyMetric} for explicit bounds relating the word lengths of elements to the number of rectangles in a numbered pattern pair.}

\section{Turing Machines in $2V$}
\label{sec:TuringMachinesIn2V}

The goal of this section is to prove Theorem~\ref{thm:MainTheorem1}.  That is, we wish to encode any complete, reversible Turing machine as an element of~$2V$, in such a way that the Turing machine is uniformly periodic if and only if the element has finite order.

Let $\T = (\states,\alp,\trans)$ be a complete, reversible Turing machine.  Let $\{\state_1,\ldots,\state_m\}$ denote the states of~$\T$, and let $\{I(\sigma_1),\ldots,I(\sigma_m)\}$ be a corresponding dyadic subdivision of~$C$, where $\sigma_1,\ldots,\sigma_m\in\{0,1\}^*$.  Similarly let $\{\letter_1,\ldots,\letter_n\}$ denote the tape symbols for~$\T$, and let $\{I(\alpha_1),\ldots,I(\alpha_n)\}$ be a corresponding dyadic subdivision of~$C$, where $\alpha_1,\ldots,\alpha_n\in\{0,1\}^*$.

Given any infinite sequence $\omega\in\alp^\infty$ of symbols, we can encode it to obtain an infinite sequence $\enc(\omega) \in \{0,1\}^\infty$ of $0$'s and $1$'s as follows:
\[
\enc(\letter_{i_1},\letter_{i_2},\letter_{i_3},\ldots) \;=\; \alpha_{i_1}\alpha_{i_2}\alpha_{i_3} \cdots,
\]
That is, $\enc(\omega)$ is the infinite concatenation of the corresponding sequence of~$\alpha$'s.

\begin{proposition}
The function $\enc\colon \alp^\infty\to\{0,1\}^\infty$ defined above is a bijection.
\end{proposition}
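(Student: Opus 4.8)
The plan is to use the fact that a dyadic subdivision of $C=\{0,1\}^\infty$ is exactly the same data as a \emph{complete prefix code}. First I would record three properties of the finite words $\alpha_1,\ldots,\alpha_n$ that follow at once from $\{I(\alpha_1),\ldots,I(\alpha_n)\}$ being a partition of $\{0,1\}^\infty$. (i) For $i\ne j$, neither $\alpha_i$ nor $\alpha_j$ is a prefix of the other: if $\alpha_i$ were a prefix of $\alpha_j$ then $I(\alpha_j)\subseteq I(\alpha_i)$, contradicting disjointness; in particular the $\alpha_i$ are pairwise distinct. (ii) Every $x\in\{0,1\}^\infty$ has exactly one $\alpha_i$ as a prefix: existence because the $I(\alpha_i)$ cover $C$, and uniqueness by (i). (iii) Every $\alpha_i$ is nonempty: if some $\alpha_i$ were the empty word then $I(\alpha_i)=C$, and disjointness would force $n=1$, in which case $\enc$ would send a constant sequence to the empty word rather than to an element of $\{0,1\}^\infty$; so we may assume $\length{\alpha_i}\ge 1$ for all $i$.

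For injectivity, suppose $\enc(\letter_{i_1},\letter_{i_2},\ldots)=\enc(\letter_{j_1},\letter_{j_2},\ldots)$, i.e.\ $\alpha_{i_1}\alpha_{i_2}\cdots=\alpha_{j_1}\alpha_{j_2}\cdots$ in $\{0,1\}^\infty$. Both $\alpha_{i_1}$ and $\alpha_{j_1}$ are prefixes of this common sequence, so (ii) gives $\alpha_{i_1}=\alpha_{j_1}$, and then (i) gives $i_1=j_1$. Deleting this common initial block leaves $\alpha_{i_2}\alpha_{i_3}\cdots=\alpha_{j_2}\alpha_{j_3}\cdots$, which has the same form; an induction then yields $i_k=j_k$ for every $k$, so the two input sequences coincide.

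For surjectivity, given $x\in\{0,1\}^\infty$ I would peel off blocks greedily: set $x^{(0)}=x$, and given $x^{(k-1)}\in\{0,1\}^\infty$ use (ii) to choose the unique index $i_k$ with $x^{(k-1)}=\alpha_{i_k}\,x^{(k)}$ for some $x^{(k)}\in\{0,1\}^\infty$. Then for each $k$ the word $\alpha_{i_1}\alpha_{i_2}\cdots\alpha_{i_k}$ is a prefix of $x$, of length $\sum_{j=1}^{k}\length{\alpha_{i_j}}\ge k$ by (iii). Since these prefix lengths tend to infinity, the infinite concatenation $\alpha_{i_1}\alpha_{i_2}\cdots$ agrees with $x$ in every coordinate, i.e.\ $\enc(\letter_{i_1},\letter_{i_2},\ldots)=x$.

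The argument is mostly bookkeeping, and the only step that uses more than formal manipulation of prefixes is the length estimate in the surjectivity part: one needs each block $\alpha_{i_j}$ to have length at least one so that the accumulated prefixes exhaust all of $x$ instead of stabilizing at a finite initial segment. This is precisely the role of observation (iii), and it is why the degenerate subdivision of $C$ into a single interval must be excluded.
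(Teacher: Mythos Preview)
Your proof is correct and follows essentially the same greedy-decoding idea as the paper: given a binary sequence, repeatedly strip off the unique $\alpha_i$ that is a prefix and record the index. The paper packages injectivity and surjectivity together by constructing $\enc^{-1}$ directly, whereas you separate the two directions, but the underlying mechanism is identical. Your explicit observation (iii) that each $\alpha_i$ must be nonempty (so that the accumulated prefix lengths go to infinity) is a point the paper leaves implicit; it is a genuine detail, since the trivial subdivision $\{I(-)\}$ is formally allowed by the paper's definitions, though irrelevant in the intended Turing-machine context where $|\alp|\ge 2$.
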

\begin{proof}Let $\psi_0\in \{0,1\}^\infty$.  Since $\{I(\alpha_1),\ldots,I(\alpha_n)\}$ is a dyadic subdivision of~$C$, there exists a unique $i_1\in\{1,\ldots,n\}$ so that $\alpha_{i_1}$ is a prefix of~$\psi_0$.  Then $\psi_0 = \alpha_{i_1}\psi_1$ for some $\psi_1\in\{0,1\}^\infty$.  Then $\psi_1$ itself has some uniquely determined $\alpha_{i_2}$ as a prefix, and hence $\psi_1=\alpha_{i_2}\psi_2$ for some $\psi_2\in\{0,1\}^\infty$.  Continuing in this way, we can express $\psi_0$ uniquely as an infinite concatenation $\alpha_{i_1}\alpha_{i_2}\alpha_{i_3} \cdots$.  Then
\[
\enc^{-1}(\psi_0) \;=\; (\letter_{i_1},\letter_{i_2},\letter_{i_3},\ldots),
\]
which proves that $\enc$ is invertible.
\end{proof}

Now let $\Phi\colon \states\times\alp^\Z \to C^2$ be the \newword{configuration encoding} defined by
\[
\Phi(\state_i,\tape) \;=\; \bigl(\,\sigma_i\,\enc(\tape_L),\;\enc(\tape_R)\,\bigr)
\]
for every configuration $(\state_i,\tape)$, where
\[
\tape_L = \bigl(\tape(-1),\tape(-2),\tape(-3),\ldots\bigr)
\qquad\text{and}\qquad
\tape_R = \bigl(\tape(0),\tape(1),\tape(2),\ldots\bigr).
\]
That is, the first component of $\Phi(\state_i,\tape)$ encodes the state $\state_i$ as well as the left half of the tape~$\tape$, while the second component of $\Phi(\state_i,\tape)$ encodes the right half of the tape~$\tape$.  Clearly $\Phi$ is a bijection from the configuration space $\states\times\alp^\Z$ to~$C^2$.

\begin{theorem}Let $F\colon \states\times\alp^\Z\to\states\times\alp^\Z$ be the transition function for~$\T$.  Then $\fT = \Phi\circ F\circ\Phi^{-1}$ is an element of~$2V$.\label{thm:BigConjugacyTheorem}
\end{theorem}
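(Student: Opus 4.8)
The plan is to exhibit an explicit dyadic subdivision $\sub$ of $C^2$ on each rectangle of which $\fT$ acts as a prefix replacement. Observe first that $\fT=\Phi\circ F\circ\Phi^{-1}$ is a bijection, being a composition of bijections: $\Phi$ is a bijection onto $C^2$, and $F$ is a bijection because $\T$ is complete and reversible. Since a prefix replacement is continuous and each rectangle of a dyadic subdivision is clopen, any map that restricts to a prefix replacement on each rectangle of a finite dyadic subdivision of $C^2$ is continuous; a continuous bijection of the compact Hausdorff space $C^2$ is a homeomorphism. Hence producing such a $\sub$ suffices to conclude $\fT\in 2V$.

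First I would use determinism and completeness to split the states of $\T$ into two kinds. If a state $\state_i$ is the initial state of some move instruction $(\state_i,\delta,\stateother)$, then determinism forces this to be the only instruction with initial state $\state_i$, so $F(\state_i,\tape)=(\stateother,M(\tape,\delta))$ for every tape $\tape$; call $\state_i$ a move state. Otherwise completeness and determinism together show that for each tape symbol $\letter_k$ there is exactly one write instruction $(\state_i,\letter_k,\stateother,\letter_{k'})$ with initial state $\state_i$ (the final state and write symbol possibly depending on $k$), and $F(\state_i,\tape)=(\stateother,W(\tape,\letter_{k'}))$ for every $\tape$ with $\tape(0)=\letter_k$; call $\state_i$ a write state.

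The core of the argument is a direct computation of $\fT$ in each of the resulting cases, obtained by transporting the action of $F$ through $\Phi$. In each case one writes the relevant half-tape with its leading block split off ($\tape(0)$ for a write or right-move state, $\tape(-1)$ for a left-move state), encodes, and reads off the formula for $\fT$; the outcomes, for all $\psi,\omega\in\{0,1\}^\infty$, are as follows. If $\state_i$ is a write state with instruction $(\state_i,\letter_k,\stateother,\letter_{k'})$, then $\fT(\sigma_i\psi,\alpha_k\omega)=(\sigma'\psi,\alpha_{k'}\omega)$, where $\sigma'$ is the word indexing $\stateother$; i.e.\ $\fT$ restricts to the prefix replacement $R(\sigma_i,\alpha_k)\to R(\sigma',\alpha_{k'})$. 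If $\state_i$ is a move state with a right instruction $(\state_i,\R,\stateother)$, then $\fT(\sigma_i\psi,\alpha_k\omega)=(\sigma'\alpha_k\psi,\omega)$, the prefix replacement $R(\sigma_i,\alpha_k)\to R(\sigma'\alpha_k,-)$. If $\state_i$ has instead a left instruction $(\state_i,\L,\stateother)$, then $\fT(\sigma_i\alpha_k\psi,\omega)=(\sigma'\psi,\alpha_k\omega)$, the prefix replacement $R(\sigma_i\alpha_k,-)\to R(\sigma',\alpha_k)$. The essential bookkeeping is that $\Phi$ encodes the left half-tape $\tape_L=(\tape(-1),\tape(-2),\ldots)$ reading \emph{outward} from the head, so that the symbol carried across the head by a move is exactly the leading block of the first coordinate after (for a right move) or before (for a left move) applying $\fT$; this is why a right move must subdivide the second coordinate of the domain while a left move must subdivide the first.

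To conclude, I would assemble all the domain rectangles appearing above into a single dyadic subdivision $\sub$ of $C^2$: for a write state or right-move state $\state_i$ the rectangles $R(\sigma_i,\alpha_1),\ldots,R(\sigma_i,\alpha_n)$, and for a left-move state $\state_i$ the rectangles $R(\sigma_i\alpha_1,-),\ldots,R(\sigma_i\alpha_n,-)$. For each $i$ these partition $I(\sigma_i)\times C$ because $\{I(\alpha_1),\ldots,I(\alpha_n)\}$ is a dyadic subdivision of $C$, and the sets $I(\sigma_i)\times C$ partition $C^2$ because $\{I(\sigma_1),\ldots,I(\sigma_m)\}$ is a dyadic subdivision of $C$; so $\sub$ is a genuine dyadic subdivision. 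Then $\fT$ is a bijection acting as a prefix replacement on each rectangle of $\sub$, hence $\fT\in 2V$. I expect the only genuine difficulty to lie in the three-case computation, especially the two move cases, where a block of the encoded tape migrates between the two coordinates of $C^2$ and the outward orientation of the left half-tape encoding must be tracked with care; the write case and the combinatorial assembly are routine.
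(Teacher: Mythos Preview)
Your proposal is correct and follows essentially the same approach as the paper: the same three-case split into left-move, right-move, and write states, the same dyadic subdivision $\sub$ (rectangles $R(\sigma_i\alpha_k,-)$ for left-move states and $R(\sigma_i,\alpha_k)$ otherwise), and the same explicit prefix-replacement formulas for $\fT$ on each rectangle. Your version is slightly more thorough in justifying why each state falls cleanly into one type (via determinism and completeness) and in explaining the outward orientation of the left half-tape encoding, but the argument is otherwise identical.
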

\begin{proof}Since $\T$ is complete and reversible, we know that~$F$ is bijective, and therefore $\fT$ is bijective as well.  To show that $\fT \in 2V$, we need only demonstrate a dyadic subdivision $\sub$ of $C^2$ such that $\fT$ acts by a prefix replacement on each rectangle of the subdivision.

The subdivision $\sub$ consists of the following dyadic rectangles:
\begin{enumerate}
\item For each state $\state_i$ that is the initial state of a left move instruction, $\sub$~includes the rectangles $\bigl\{R(\sigma_i\alpha_k,-)\bigr\}_{k=1}^n$\smallskip
\item For each state $\state_i$ that is the initial state of a right move instruction, $\sub$~includes the rectangles $\bigl\{R(\sigma_i,\alpha_k)\bigr\}_{k=1}^n$.\smallskip
\item For each state $\state_i$ that is the initial state of write instructions, $\sub$~includes the rectangles $\bigl\{R(\sigma_i,\alpha_k)\bigr\}_{k=1}^n$.
\end{enumerate}
Note that, in each of the three cases, the given rectangles are a subdivision of~$R(\sigma_i,-)$.  It follows that $\sub$ is a dyadic subdivision of~$C^2$.  Moreover, it is easy to check that $\fT$ has the right form on each rectangle of~$\sub$.  In particular:
\begin{enumerate}
\item For a left move instruction $(\state_i,\L,\state_j)$, the formula for $\fT$ on each rectangle $R(\sigma_i\alpha_k,-)$ is
\[
\fT(\sigma_i\alpha_k\psi,\omega) \;=\; (\sigma_j\psi,\alpha_k\omega)
\]
for all $\psi,\omega\in \{0,1\}^\infty$.  That is, $\fT$ maps each $R(\sigma_i\alpha_k,-)$ to $R(\sigma_j,\alpha_k)$.\smallskip
\item For a right move instruction $(\state_i,\R,\state_j)$, the formula for $\fT$ on each rectangle $R(\sigma_i,\alpha_k)$ is
\[
\fT(\sigma_i\psi,\alpha_k\omega) \;=\; (\sigma_j\alpha_k\psi,\omega)
\]
for all $\psi,\omega\in \{0,1\}^\infty$.  That is, $\fT$ maps each $R(\sigma_i,\alpha_k)$ to $R(\sigma_j\alpha_k,-)$.\smallskip
\item For a write instruction $(\state_i,\letter_j,\state_k,\letter_\ell)$, the formula for $\fT$ on the rectangle $R(\sigma_i,\alpha_k)$ is
\[
\fT(\sigma_i\psi,\alpha_k\omega) \;=\; (\sigma_j\psi,\alpha_\ell \omega)
\]
for all $\psi,\omega\in \{0,1\}^\infty$.  That is, $\fT$ maps each $R(\sigma_i,\alpha_k)$ to $R(\sigma_j,\alpha_\ell)$.
\end{enumerate}
We conclude that $\fT\in 2V$.
\end{proof}

\begin{proposition}Given a complete, reversible Turing machine $\T = (\states,\alp,\trans)$, there exists an effective procedure for choosing an element~$\fT$ as defined above, and expressing it as a word in the generators of~$2V$.
\end{proposition}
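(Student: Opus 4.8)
\emph{Proof proposal.} The plan is to produce an explicit numbered pattern pair for $\fT$ by a finite, mechanical reading of the transition table $\trans$, and then to invoke Proposition~\ref{prop:EffectiveWord} to convert that pattern pair into a word in the generators of~$2V$.

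First I would fix effective choices of the encoding data. Given $\states=\{\state_1,\ldots,\state_m\}$ and $\alp=\{\letter_1,\ldots,\letter_n\}$, choose $\sigma_1,\ldots,\sigma_m\in\{0,1\}^*$ and $\alpha_1,\ldots,\alpha_n\in\{0,1\}^*$ to be the leaves of any finite complete binary tree with at least $m$ (respectively $n$) leaves, say the lexicographically first such choice. This is plainly algorithmic, and by construction $\{I(\sigma_i)\}_{i=1}^m$ and $\{I(\alpha_k)\}_{k=1}^n$ are dyadic subdivisions of~$C$. With these choices fixed, the element $\fT=\Phi\circ F\circ\Phi^{-1}$ of Theorem~\ref{thm:BigConjugacyTheorem} is determined, and it remains to extract a numbered pattern pair for it.

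Next I would build that pattern pair. The domain subdivision is the subdivision $\sub$ from the proof of Theorem~\ref{thm:BigConjugacyTheorem}: for each state $\state_i$, read off from $\trans$ which of the three mutually exclusive cases it falls into (initial state of a left move instruction, of a right move instruction, or of a family of write instructions), and list the corresponding $n$ dyadic rectangles. The same pass through the instructions of $\trans$ records, for each listed rectangle, its image under $\fT$ — one of $R(\sigma_j,\alpha_k)$, $R(\sigma_j\alpha_k,-)$, or $R(\sigma_j,\alpha_\ell)$, exactly as in the three cases of that proof. This yields a finite list of ordered pairs of dyadic rectangles, i.e.\ a candidate numbered pattern pair. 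Since $F$ is bijective (as $\T$ is complete and reversible), $\fT$ is bijective, and since $\sub$ is a genuine dyadic subdivision, the image list $\{\fT(R)\mid R\in\sub\}$ automatically partitions $C^2$ into dyadic rectangles and is therefore itself a dyadic subdivision; hence the candidate pattern pair is valid. Feeding it into Proposition~\ref{prop:EffectiveWord} then produces a word for $\fT$ in the generators of~$2V$, completing the procedure (and, together with Theorem~\ref{thm:KariOllinger} and Theorem~\ref{thm:BigConjugacyTheorem}, Theorem~\ref{thm:MainTheorem1}).

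Granting the earlier results, there is essentially no real obstacle here; the one point deserving a sentence of care is the verification that the three cases partition the state set and that, in the write case, each read symbol $\alpha_k$ occurs in exactly one write instruction out of $\state_i$. Both follow from determinism — a move instruction out of $\state_i$ applies to all tapes and so precludes any other instruction out of $\state_i$ — together with completeness, which forces a state with no move instruction to carry a write instruction for every one of the $n$ tape symbols. This guarantees that the listed rectangles occur without repetition and cover $\bigcup_i R(\sigma_i,-)=C^2$, so the construction above really is well defined and effective.
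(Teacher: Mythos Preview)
Your proposal is correct and follows essentially the same route as the paper: fix explicit codings $\sigma_i,\alpha_k$ effectively, read off the numbered pattern pair from the case analysis in the proof of Theorem~\ref{thm:BigConjugacyTheorem}, and then invoke Proposition~\ref{prop:EffectiveWord}. The only quibble is that your trees should have \emph{exactly} $m$ (respectively $n$) leaves rather than ``at least'' that many, so that the $\{I(\sigma_i)\}$ and $\{I(\alpha_k)\}$ genuinely partition~$C$; your extra paragraph justifying that the three instruction cases partition the state set is a welcome addition that the paper leaves implicit.
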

\begin{proof}Note first that it is easy to choose the dyadic subdivisions $\{I(\sigma_1),\ldots,I(\sigma_m)\}$ and $\{I(\alpha_1),\ldots,I(\alpha_n)\}$.  For example, $(\sigma_1,\ldots,\sigma_m)$ could be the $m$'th term of the sequence
\[
(-),\quad (0,1),\quad (0,10,11),\quad (0,10,110,111),\quad (0,10,110,1110,1111),\quad \ldots,
\]
and $(\alpha_1,\ldots,\alpha_n)$ could be the $n$'th term of this sequence.

Once the subdivisions are chosen, we can use the formulas given in the proof of Theorem~\ref{thm:BigConjugacyTheorem} to construct a numbered pattern pair for the element~$\fT$.  Finally, we can use Proposition~\ref{prop:EffectiveWord} to compute a word for~$\fT$.
\end{proof}

\begin{proposition}The element $\fT$ has finite order if and only if $\T$ is uniformly periodic.
\end{proposition}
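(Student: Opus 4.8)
The plan is to reduce the statement to the conjugacy relation already established in Theorem~\ref{thm:BigConjugacyTheorem}, namely that $\fT = \Phi\circ F\circ\Phi^{-1}$ where $\Phi\colon\states\times\alp^\Z\to C^2$ is a bijection and $F$ is the transition function of $\T$. Conjugation by a fixed bijection is a group isomorphism between the symmetric group on $\states\times\alp^\Z$ and the symmetric group on $C^2$, so it preserves orders of elements; the whole proof is just making this precise. Concretely, the first step is to record that $\fT^n = \Phi\circ F^n\circ\Phi^{-1}$ for every $n\geq 1$. This follows by a one-line induction: assuming it for $n$, we get $\fT^{n+1} = \fT\circ\fT^n = (\Phi\circ F\circ\Phi^{-1})\circ(\Phi\circ F^n\circ\Phi^{-1}) = \Phi\circ F^{n+1}\circ\Phi^{-1}$, the inner $\Phi^{-1}\circ\Phi$ cancelling.

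Given this identity, the equivalence is formal. The identity element of $2V$ is the identity homeomorphism $\mathrm{id}_{C^2}$. Since $\Phi$ is a bijection, $\fT^n = \mathrm{id}_{C^2}$ holds if and only if $\Phi\circ F^n\circ\Phi^{-1} = \mathrm{id}_{C^2}$, which in turn holds if and only if $F^n = \Phi^{-1}\circ\mathrm{id}_{C^2}\circ\Phi = \mathrm{id}_{\states\times\alp^\Z}$. Hence $\fT$ has finite order, i.e.\ $\fT^n = \mathrm{id}_{C^2}$ for some $n\geq 1$, if and only if $F^n$ is the identity function on the configuration space for some $n\geq 1$, which is exactly the definition of $\T$ being uniformly periodic. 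One should add a sentence observing that the exponent $n=0$ is excluded on both sides (since $\fT^0$ and $F^0$ are trivially the identity), so that "finite order in $2V$'' and "uniformly periodic'' refer to precisely the same set of exponents.

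I do not expect any genuine obstacle here: all of the substance is contained in Theorem~\ref{thm:BigConjugacyTheorem}, which builds the conjugating bijection $\Phi$ and verifies that $\fT$ actually lies in $2V$. The only points deserving a moment's care are the two bookkeeping matters just mentioned — identifying the identity of $2V$ with $\mathrm{id}_{C^2}$, and matching up the ranges of the exponents — after which the proposition follows immediately from the displayed computation $\fT^n = \Phi\circ F^n\circ\Phi^{-1}$.
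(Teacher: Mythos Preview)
Your argument is correct and is exactly the paper's approach: the paper simply notes that $\fT=\Phi\circ F\circ\Phi^{-1}$ implies $(\fT)^p=\Phi\circ F^p\circ\Phi^{-1}$ for each~$p$, hence $(\fT)^p$ is the identity if and only if $F^p$ is. Your additional remarks about identifying the identity of $2V$ with $\mathrm{id}_{C^2}$ and about excluding the exponent $n=0$ are harmless bookkeeping that the paper leaves implicit.
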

\begin{proof}Since $\fT = \Phi\circ F\circ\Phi^{-1}$, it follows that $(\fT)^p = \Phi\circ F^p\circ\Phi^{-1}$ for each~$p$, so $(\fT)^p$ is the identity if and only if $F^p$ is the identity function.
\end{proof}

This completes the proof of Theorem~\ref{thm:MainTheorem1}.  Combining this with the result of Kari and Ollinger (Theorem~\ref{thm:KariOllinger} above), we conclude that the torsion problem in $2V$ is undecidable.  The following theorem may shed some light on the nature of this result:

\begin{theorem}For each $n\in\mathbb{N}$, let\/ $\Omega(n)$ be the maximum possible order of a torsion element of $2V$ \!having at most $n$ rectangles in its numbered pattern pair.  Then\/ $\Omega$ is not bounded above by any computable function\/ $\mathbb{N}\to\mathbb{N}$.
\end{theorem}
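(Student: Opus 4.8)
The plan is to argue by contradiction, turning a computable upper bound for $\Omega$ into an algorithm for a problem already known to be undecidable. Since the passage from Turing machines to elements of $2V$ carried out in Section~\ref{sec:TuringMachinesIn2V} is effective, the most convenient contradiction target is the Kari--Ollinger theorem (Theorem~\ref{thm:KariOllinger}); the undecidability of the torsion problem itself (Theorem~\ref{thm:MainTheorem1}) would serve equally well.

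First I would record that $\Omega(n)$ is well-defined and finite. There are only finitely many patterns with at most $n$ rectangles, each admits only finitely many numberings, and a numbered pattern pair determines at most one element of $2V$; hence only finitely many elements of $2V$ possess a numbered pattern pair with at most $n$ rectangles. Among these, the torsion elements form a nonempty set, since it contains the identity, so they have a well-defined maximal order $\Omega(n)$.

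Now suppose, toward a contradiction, that $\Omega(n) \le g(n)$ for every $n$, where $g\colon\mathbb{N}\to\mathbb{N}$ is computable. Given a complete, reversible Turing machine $\T$, run the effective construction of Section~\ref{sec:TuringMachinesIn2V} to produce a numbered pattern pair for the element $\fT = \Phi\circ F\circ\Phi^{-1}$, which lies in $2V$ by Theorem~\ref{thm:BigConjugacyTheorem}; let $n$ be the number of rectangles in this pair. As shown above, $\fT$ has finite order if and only if $\T$ is uniformly periodic, so it suffices to decide whether $\fT$ is a torsion element. But if $\fT$ has finite order, then, being a torsion element admitting a numbered pattern pair with at most $n$ rectangles, its order is at most $\Omega(n)\le g(n)$. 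Hence $\fT$ is torsion if and only if $(\fT)^k$ is the identity for some $k\in\{1,\dots,g(n)\}$. Using Brin's composition procedure one can compute a numbered pattern pair for each power $(\fT)^k$ with $k\le g(n)$, and Proposition~\ref{prop:DetermineEqual} lets us test each such pair for equality with the identity. Since $g$ is computable, this produces an algorithm deciding uniform periodicity of complete, reversible Turing machines, contradicting Theorem~\ref{thm:KariOllinger}. Therefore no computable function bounds $\Omega$ from above.

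I do not anticipate a real obstacle: this is a \emph{busy-beaver}-style reduction, and every ingredient --- the effective passage $\T\mapsto\fT$, Brin's composition algorithm, the decidability of equality in $2V$, and the equivalence of finite order of $\fT$ with uniform periodicity of $\T$ --- is already in place. The only points deserving a sentence of care are the finiteness of $\Omega(n)$ and the routine verification that, once $g(n)$ has been computed, forming and testing the powers $(\fT)^1,\dots,(\fT)^{g(n)}$ constitutes a bona fide algorithm.
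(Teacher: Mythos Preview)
Your argument is correct and is essentially the paper's own proof: assume a computable bound $g$, then decide torsion for $f$ by computing $f,f^2,\ldots,f^{g(n)}$ and testing each against the identity. The only cosmetic difference is that the paper phrases the contradiction against the undecidability of the torsion problem in $2V$ (Theorem~\ref{thm:MainTheorem1}) rather than against Kari--Ollinger directly; as you yourself note, the two targets are interchangeable here, and your added remark on the finiteness of $\Omega(n)$ is a harmless bonus.
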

\begin{proof}Suppose to the contrary that $\Omega$ were bounded above by a computable function $\gamma\colon\N\to\N$.  Then, given any element $f\in 2V$ with $n$ rectangles in its numbered pattern pair, it would be a simple matter to determine whether $f$ has finite order.  Specifically, we could first compute~$\gamma(n)$, and then compute the powers $f,f^2,f^3,\ldots,f^{\gamma(n)}$, and finally check to see if any of these is the identity.  Since the torsion problem in $2V$ is undecidable, it follows that no such function $\gamma$ exists.
\end{proof}

Thus the function $\Omega(n)$ must grow very quickly, e.g.~on the order of the busy beaver function (see~\cite{Rado}).

We end this section with an example that illustrates the construction of~$\fT$.

\begin{example}Consider the Turing machine $\T$ with four states $\{\state_1,\state_2,\state_3,\state_4\}$ and three symbols $\{\letter_1,\letter_2,\letter_3\}$, which obeys the following rules:
\begin{enumerate}
\item From state $\state_1$, move the head right and go to state $\state_2$.\smallskip
\item From state $\state_2$, read the input symbol $\tape(0)$:
\begin{enumerate}
\item If $\tape(0) = \letter_1$, then write $\letter_1$ and go back to state $\state_1$.
\item If $\tape(0) = \letter_2$, then write $\letter_3$ and go back to state $\state_1$.
\item If $\tape(0) = \letter_3$, then write $\letter_2$ and go to state $\state_3$.\smallskip
\end{enumerate}
\item From state $\state_3$, move the head left and go to state $\state_4$.\smallskip
\item From state $\state_4$, read the input symbol $\tape(0)$:
\begin{enumerate}
\item If $\tape(0) = \letter_1$, then write $\letter_1$ and go back to state $\state_3$.
\item If $\tape(0) = \letter_2$, then write $\letter_2$ and go to state $\state_1$.
\item If $\tape(0) = \letter_3$, then write $\letter_3$ and go back to state $\state_3$.
\end{enumerate}
\end{enumerate}
That is, the transition table for $\T$ is the set
\begin{multline*}
\qquad \{(\state_1,\R,\state_2),(\state_2,\letter_1,\state_1,\letter_1),(\state_2,\letter_2,\state_1,\letter_3),
(\state_2,\letter_3,\state_3,\letter_2), \\
(\state_3,\L,\state_4),(\state_4,\letter_1,\state_3,\letter_1),(\state_4,\letter_2,\state_1,\letter_2),
(\state_4,\letter_3,\state_3,\letter_3)\}\qquad
\end{multline*}
It is easy to check that $\T$ is complete and reversible.
\begin{figure}[t]
\begin{center}
\includegraphics{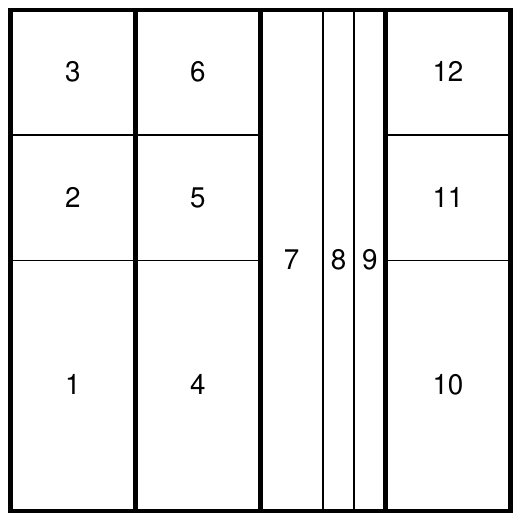}\qquad\includegraphics{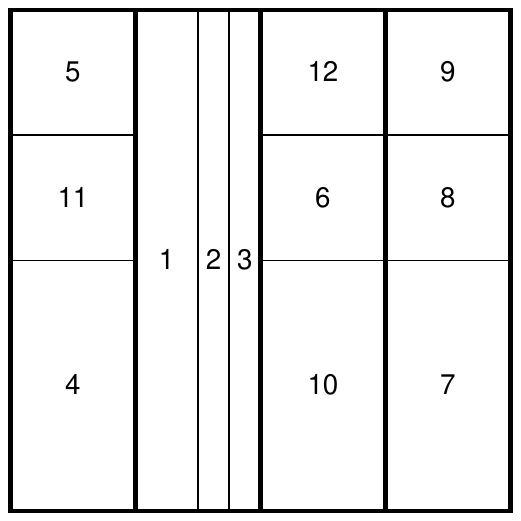}
\caption{The element of $2V$ corresponding to the Turing machine in Example~\ref{ex:TuringMachine}.  The four main vertical rectangles correspond to the four states $\{\state_1,\state_2,\state_3,\state_4\}$.}
\label{fig:TuringMachineElement}
\end{center}
\end{figure}\
To make a corresponding element of $2V$, let $(\sigma_1,\sigma_2,\sigma_3,\sigma_4)=(00,01,10,11)$, and let $(\alpha_1,\alpha_2,\alpha_3)=(0,10,11)$.  Then the resulting element $\fT\in 2V$ is shown in Figure~\ref{fig:TuringMachineElement}.
\label{ex:TuringMachine}
\end{example}

\section{Transducers}

{In this section we show that it is undecidable whether the rational function defined by a given asynchronous transducer has finite order (Theorem~\ref{thm:MainTheorem2}). This settles the finiteness problem for asynchronous automata groups (Theorem~\ref{thm:MainTheorem3}).}
  
{We begin by briefly reviewing the relevant facts about transducers.  See \cite{Gluskov63} for a thorough introduction to transducers, and \cite{GNS2000} for a discussion of transducers in the context of group theory.}

\begin{definition}
An asynchronous \newword{transducer} is an ordered quadruple $(\alp,\states,\state_0,\transfunc)$, where
\begin{itemize}
\item $\alp$ is a finite alphabet,\smallskip
\item $\states$ is a finite set of states,\smallskip
\item $\state_0 \in \states$ is the \newword{initial state}, and\smallskip
\item $\transfunc\colon \states \times \alp \to \states \times \alp^*$ is the \newword{transition function}, where $\alp^*$ denotes the set of all finite strings over~$\alp$.
\end{itemize}
\end{definition}

Given a transducer $(\alp,\states,\state_0,\transfunc)$ and an infinite \newword{input string} $\alpha_1\alpha_2\alpha_3\cdots\in\alp^\infty$, the corresponding \newword{state sequence} $\{s_i\}$ and \newword{output sequence} $\{\beta_i\}$ are defined recursively by
\[
(\state_i,\beta_i) \;=\; \transfunc(\state_{i-1},\alpha_i).
\]
The concatenation $\beta_1\beta_2\cdots$ of the output sequence is called the \newword{output string}.  This string is usually infinite, but will be finite if only finitely many $\beta_i$'s are nonempty.

We say that the transducer is \newword{nondegenerate} if every infinite input string results in an infinite output string.  In this case, the function $f\colon \alp^\infty \to \alp^\infty$ mapping each input string to the corresponding output string is called the \newword{rational function} defined by the given transducer.  {Rational functions are always continuous, and any invertible rational function is a homeomorphism.}

{For a given finite alphabet $\alp$, the \newword{rational group} $\rat$ is the group consisting of all invertible rational homeomorphisms of~$\alp^\infty$. It is proven in~\cite{GNS2000} that $\rat$ forms a group, and that the isomorphism type of $\rat$ does not depend on the size of~$\alp$, as long as $\alp$ has at least two letters.}

{Our goal in this section is to prove the following theorem.}

\begin{theorem}\label{thm:SubgroupOfRationalGroup}The rational group $\rat$ has a subgroup isomorphic to $2V$.
\end{theorem}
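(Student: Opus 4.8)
The plan is to conjugate the standard action of $2V$ on $C^2$ into the rational group over the four-letter alphabet $\alp = \{0,1\}^2$; since the isomorphism type of $\rat$ does not depend on the alphabet as long as it has at least two letters, this will prove the theorem. Identify $\alp^\infty$ with $C^2=\{0,1\}^\infty\times\{0,1\}^\infty$ by the \emph{interleaving} homeomorphism $\zeta\colon C^2\to\alp^\infty$ given by $\zeta(\psi,\omega)_i=(\psi_i,\omega_i)$. Conjugation by $\zeta$ is an isomorphism from $\mathrm{Homeo}(C^2)$ onto $\mathrm{Homeo}(\alp^\infty)$, so $f\mapsto\zeta\circ f\circ\zeta^{-1}$ is an injective homomorphism on $2V$; its image will be a subgroup of $\rat$ once we show that each $\zeta\circ f\circ\zeta^{-1}$ is a rational function---it is automatically a homeomorphism, hence lies in $\rat$ with inverse $\zeta\circ f^{-1}\circ\zeta^{-1}$---so everything reduces to exhibiting, for each $f\in 2V$, an asynchronous transducer computing $\zeta\circ f\circ\zeta^{-1}$.

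To build that transducer, fix $f\in 2V$ together with a numbered pattern pair: a domain subdivision $\{R(\alpha_1,\beta_1),\dots,R(\alpha_N,\beta_N)\}$ sent by prefix replacements onto range rectangles $R(\gamma_1,\delta_1),\dots,R(\gamma_N,\delta_N)$, and let $L$ bound the lengths of all the words $\alpha_j,\beta_j,\gamma_j,\delta_j$. The transducer $\T_f$ reads an input $\zeta(\psi,\omega)$ one letter (that is, one coordinate pair) at a time and runs in two phases. In the \emph{search phase} it stores the prefix pair $(\psi_1\cdots\psi_k,\omega_1\cdots\omega_k)$ read so far---bounded data, hence only finitely many states---and emits the empty string, until the index $j$ appears with $\alpha_j$ a prefix of $\psi_1\cdots\psi_k$ and $\beta_j$ a prefix of $\omega_1\cdots\omega_k$; by disjointness of the rectangles this $j$ is unique and it appears exactly at $k=\max(|\alpha_j|,|\beta_j|)$. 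Writing $\psi=\alpha_j u\psi'$ and $\omega=\beta_j v\omega'$, where $u$ and $v$ are the already-read overhangs (one of them empty, both of length $\le L$), the output must from now on be $\zeta(\gamma_j u\psi',\ \delta_j v\omega')$; so $\T_f$ immediately emits the first $\min(|\gamma_j u|,|\delta_j v|)$ interleaved pairs and passes to the \emph{transcription phase}, carrying as its state the still-unmatched overhang (of length $\le 2L$) in whichever coordinate is ahead.

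In the transcription phase, $\T_f$ reads each further letter $(\psi_{k+i},\omega_{k+i})$, appends $\psi_{k+i}$ and $\omega_{k+i}$ to the two coordinate streams, and emits one interleaved pair; since one symbol is consumed and one produced per coordinate at every step, the coordinate overhang stays pinned at its bounded value forever, so only finitely many states are used overall. By construction $\T_f$ is nondegenerate (it outputs a letter for each input letter throughout the transcription phase) and its rational function is precisely $\zeta\circ f\circ\zeta^{-1}$. Hence $f\mapsto\zeta\circ f\circ\zeta^{-1}$ embeds $2V$ into $\rat$.

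The only delicate point is the asynchrony between the two coordinates caused by dyadic rectangles whose two sides have unequal length: one must verify that the overhang between the coordinate streams cannot grow without bound, since otherwise $\T_f$ would fail to be a finite-state transducer. This is exactly where the uniform bound $L$ on all prefixes in the pattern pair, combined with the one-in-one-out balance of the transcription phase, is used---the overhang is at most $2L$ at all times.
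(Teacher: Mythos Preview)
Your argument is correct and uses the same core idea as the paper: conjugate the $2V$-action on $C^2$ into $\alp^\infty$ for $\alp=\{0,1\}^2$ via the interleaving homeomorphism, and show each conjugate is given by a finite-state asynchronous transducer. The paper's execution differs in organization. Rather than building a single two-phase machine, the paper first subdivides the domain pattern so that all $\alpha_i,\beta_i$ have the \emph{same} length; this makes the interleaved domain words $\epsilon_i$ a complete prefix code over $\alp$ of common length, so the ``search phase'' becomes trivial (read a fixed number of letters). The map is then written as $f^\pi(\epsilon_i\omega)=\mu_{\gamma_i,\delta_i}^\pi(\omega)$, and two short lemmas finish the job: one showing that each prefix-insertion map $\mu_{\gamma,\delta}^\pi$ is rational (via a shift-register transducer on $\{0,1\}^{|\gamma|}\times\{0,1\}^{|\delta|}$), and one showing that splicing rational functions along a complete prefix code yields a rational function. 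Your direct construction avoids these lemmas at the cost of tracking a variable-length overhang; the paper's equal-length subdivision buys a cleaner state space and a modular proof, but both routes establish the same embedding.
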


{Since $2V$ has unsolvable torsion problem, Theorems~\ref{thm:MainTheorem2} and \ref{thm:MainTheorem3} will follow immediately.}

{We begin with the following proposition, which will help us to} combine rational functions together.

\begin{proposition}\label{prop:TransducerSplicing}Let $\alp$ be a finite alphabet, let $\{\alpha_1,\ldots,\alpha_n\}$ be a complete prefix code over $\alp$, and let $f_1,\ldots,f_n\colon \alp^*\to\alp^*$ be rational functions.  Define a function $f\colon\alp^*\to\alp^*$ by
\[
f(\alpha_i\omega) = f_i(\omega)
\]
for all $i\in\{1,\ldots,n\}$ and $\omega\in\alp^*$.  Then $f$ is {a} rational {function.}
\end{proposition}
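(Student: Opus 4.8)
The plan is to build a single transducer for $f$ by \emph{splicing} transducers for the $f_i$ behind a common, output-free ``reading phase'' that consumes just enough of the input to identify which block $\alpha_i$ it begins with. Since each $f_i$ is rational, fix a nondegenerate transducer $\T_i = (\alp,\states_i,\state_0^{(i)},\transfunc_i)$ defining $f_i$, and arrange by renaming that the state sets $\states_1,\ldots,\states_n$ are pairwise disjoint. Because $\{\alpha_1,\ldots,\alpha_n\}$ is a complete prefix code, the set $P$ of proper prefixes of the strings $\alpha_i$ (which includes the empty string, assuming as we may that no $\alpha_i$ is empty) is finite, and a short argument shows that for every $u\in P$ and every $a\in\alp$ the string $ua$ is a prefix of some $\alpha_j$ --- either a proper prefix, so $ua\in P$, or equal to some (then necessarily unique) $\alpha_j$. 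This is the only place the \emph{completeness} of the code enters, and it is the same fact that makes $f$ well defined on all of $\alp^\infty$: every infinite string has exactly one $\alpha_i$ as a prefix.

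Using this, I would define the spliced transducer $\T$ to have state set $P \sqcup \bigsqcup_{i=1}^{n}\states_i$, initial state the empty string, and transition function $\transfunc$ as follows. On a reading state $u\in P$ and an input letter $a$: if $ua\in P$ put $\transfunc(u,a)=(ua,\lambda)$, and if $ua=\alpha_j$ put $\transfunc(u,a)=(\state_0^{(j)},\lambda)$, where $\lambda$ is the empty output string; on a state $\state\in\states_i$ and an input letter $a$ simply copy $\T_i$, i.e.\ $\transfunc(\state,a)=\transfunc_i(\state,a)$. The observation above guarantees that these clauses cover every pair $(x,a)$ with $x$ a state and $a\in\alp$, and that the two clauses for reading states are mutually exclusive, so $\transfunc$ is a genuine total function into $\states\times\alp^*$ and $\T$ is a legitimate transducer.

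It remains to verify that $\T$ realizes $f$ and is nondegenerate. Given an input $\alpha_i\omega$ with $\omega\in\alp^\infty$, running $\T$ reads the letters of $\alpha_i$ one at a time, walking down the tree of prefixes and emitting nothing, and after the final letter of $\alpha_i$ it lands in state $\state_0^{(i)}$; from that moment on it processes $\omega$ exactly as $\T_i$ does and therefore emits $f_i(\omega)=f(\alpha_i\omega)$. Nondegeneracy follows at once: the reading phase lasts only $|\alpha_i|$ steps, and thereafter $\T$ outputs the output of the nondegenerate transducer $\T_i$ on the infinite string $\omega$, which is infinite. Hence $f$ is the rational function defined by $\T$.

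The construction is essentially routine bookkeeping; the steps that need genuine care are establishing, from completeness of the prefix code, that the reading phase always terminates in some $\state_0^{(j)}$ (equivalently, that $\transfunc$ is total), and checking that inserting a finite output-free prefix does not spoil nondegeneracy --- which it cannot, precisely because it is finite. It is also worth noting that the whole construction is effective: from transducers for the $f_i$ and the list $\alpha_1,\ldots,\alpha_n$ one can write down $\T$ explicitly, which is what will be needed when this proposition is used to assemble a copy of $2V$ inside $\rat$.
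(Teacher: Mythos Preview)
Your proof is correct and follows essentially the same construction as the paper: both build the spliced transducer by adjoining a tree of proper-prefix ``reading'' states in front of the disjoint union of the $\T_i$, with empty output during the reading phase and a handoff to $\state_0^{(j)}$ once $\alpha_j$ has been consumed. Your version is in fact more detailed than the paper's, which omits the verification that completeness of the code makes $\transfunc$ total and the explicit check of nondegeneracy.
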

\begin{proof}Let $\{\beta_1,\ldots,\beta_m\}$ be the set of all proper prefixes of strings in $\{\alpha_1,\ldots,\alpha_n\}$, where $\beta_1$ is the empty string.  For each $i\in\{1,\ldots,n\}$, let $(\alp,\states_i,\state_{0i},\transfunc_i)$ be a transducer for~$f_i$.  Define a transducer $(\alp,\states,\state_0,\transfunc)$ as follows:
\begin{itemize}
\item The state set $\states$ is the disjoint union $\{\beta_1,\ldots,\beta_m\} \uplus \states_1 \uplus \cdots \uplus \states_n$.\smallskip
\item The initial state $\state_0$ is the empty string $\beta_1$.\smallskip
\item The transition function $\transfunc\colon \states\times \alp \to \states$ is defined by
\[
\transfunc(\state,\letter) \;=\;
\begin{cases}
(\beta_j,-) & \text{if }\state = \beta_i\text{ and }\beta_i\letter=\beta_j, \\
(\state_{0j},-) & \text{if }\state = \beta_i\text{ and }\beta_i\letter=\alpha_j, \\
\transfunc_i(\state,\letter) & \text{if }\state\in\states_i.
\end{cases}
\]
\end{itemize}
It is easy to check that the the rational function defined by this transducer is the desired function~$f$.
\end{proof}

Now consider the four-element alphabet $\alp=\{00,01,10,11\}$. Let $\pi\colon \alp^\infty \to C^2$ be the function defined by
\[
\pi(\epsilon_1\delta_1,\epsilon_2\delta_2,\epsilon_3\delta_3,\ldots) \;=\; (\epsilon_1\epsilon_2\epsilon_3\cdots,\delta_1\delta_2\delta_3\cdots).
\]
Given any function $f\colon C^2\to C^2$, let $f^\pi = \pi^{-1}\circ f\circ \pi$ denote the corresponding function $\alp^\infty \to \alp^\infty$.  {We shall prove that the mapping $f\mapsto f^{\pi}$ defines a monomorphism from $2V$ to~$\rat$.}

\begin{proposition}\label{prop:TransducersForParts}Let $\alpha,\beta\in \{0,1\}^*$, and let $\mu_{\alpha,\beta}\colon C^2\to C^2$ be the function
\[
\mu_{\alpha,\beta}(\psi,\omega) \;=\; (\alpha\psi,\beta\omega).
\]
Then $\mu_{\alpha,\beta}^\pi$ is a rational function.
\end{proposition}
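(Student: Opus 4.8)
The plan is to compute $\mu_{\alpha,\beta}^\pi$ explicitly and then read a transducer off the formula. Write $\alpha = a_1\cdots a_p$ and $\beta = b_1\cdots b_q$ with $p = \length{\alpha}$ and $q = \length{\beta}$, allowing $p$ or $q$ to be $0$. Given an input string $(\epsilon_1\delta_1)(\epsilon_2\delta_2)\cdots\in\alp^\infty$, the map $\pi$ sends it to $(\epsilon_1\epsilon_2\cdots,\ \delta_1\delta_2\cdots)$, then $\mu_{\alpha,\beta}$ sends this to $(a_1\cdots a_p\epsilon_1\epsilon_2\cdots,\ b_1\cdots b_q\delta_1\delta_2\cdots)$, and finally $\pi^{-1}$ interleaves these two bit strings letter by letter. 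Hence the $n$-th letter of $\mu_{\alpha,\beta}^\pi\bigl((\epsilon_1\delta_1)(\epsilon_2\delta_2)\cdots\bigr)$ is the pair whose first bit is $a_n$ when $n\le p$ and $\epsilon_{n-p}$ when $n>p$, and whose second bit is $b_n$ when $n\le q$ and $\delta_{n-q}$ when $n>q$. In words: after emitting the fixed leading pairs $(a_i,b_i)$, the map $\mu_{\alpha,\beta}^\pi$ copies the stream of first coordinates delayed by $p$ steps and the stream of second coordinates delayed by $q$ steps. Since $p$ and $q$ are fixed, doing this requires only a bounded amount of memory, which is exactly what a finite transducer can supply.

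I would then produce the transducer in one of two ways. The quickest is to reduce to single letters: one checks directly that $\mu_{\alpha,-}$ commutes with $\mu_{-,\beta}$ and that $\mu_{\alpha\alpha',\beta}=\mu_{\alpha,\beta}\circ\mu_{\alpha',-}$ and $\mu_{\alpha,\beta\beta'}=\mu_{\alpha,\beta}\circ\mu_{-,\beta'}$, so that
\[
\mu_{\alpha,\beta} \;=\; \mu_{a_1,-}\circ\cdots\circ\mu_{a_p,-}\circ\mu_{-,b_1}\circ\cdots\circ\mu_{-,b_q}.
\]
Since $f\mapsto f^\pi$ respects composition, $\mu_{\alpha,\beta}^\pi$ is the corresponding composite of the four functions $\mu_{0,-}^\pi$, $\mu_{1,-}^\pi$, $\mu_{-,0}^\pi$, $\mu_{-,1}^\pi$, and a composite of rational functions is again rational (a standard fact about transducers, via the product construction; see~\cite{GNS2000}). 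It then suffices to exhibit a transducer for each of the four, and each needs only two states: for instance $\mu_{0,-}^\pi$ is computed by $(\alp,\{\state_0,\state_1\},\state_0,\transfunc)$ with $\transfunc(\state_e,\epsilon\delta)=(\state_\epsilon,\ e\delta)$, where $e\delta$ denotes the letter of $\alp$ with first bit $e$ and second bit $\delta$; this emits the first-coordinate stream delayed by one step, primed with a leading $0$, and copies the second-coordinate stream verbatim. The functions $\mu_{1,-}^\pi$ (initial state $\state_1$), $\mu_{-,0}^\pi$ and $\mu_{-,1}^\pi$ (mirror-image machines) are handled the same way. Alternatively, avoiding any appeal to closure under composition, one can write down a single transducer for $\mu_{\alpha,\beta}^\pi$ directly: its steady-state states record the bounded queue of first- (resp.\ second-) coordinate bits that have been read but not yet emitted, of which there are at most $\max(p,q)-\min(p,q)$, together with a transient phase of at most $\max(p,q)$ states that emits the leading pairs $(a_i,b_i)$ and initializes the queue. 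In either presentation nondegeneracy is immediate, since past the transient the machine emits exactly one letter per letter read.

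The only nonroutine point is checking that the machine described really computes $\mu_{\alpha,\beta}^\pi$. This is an induction on the number $n$ of input letters read, showing that the accumulated output after $n$ steps is the length-$(n+\min(p,q))$ prefix of the string written out above; the thing that takes care is keeping the index bookkeeping straight when $p\ne q$, which is where I expect the only real friction to lie. The compositional route sidesteps most of that bookkeeping at the cost of the (standard) closure-under-composition fact, while the direct construction keeps the argument fully self-contained; I would most likely present the compositional version for brevity. The degenerate cases $\alpha=-$ or $\beta=-$ are covered by taking $p=0$ or $q=0$ in the same descriptions.
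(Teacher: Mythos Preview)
Your argument via the compositional route is correct: the factorization $\mu_{\alpha,\beta}=\mu_{a_1,-}\circ\cdots\circ\mu_{a_p,-}\circ\mu_{-,b_1}\circ\cdots\circ\mu_{-,b_q}$ holds, your two-state transducers for the single-bit prepends $\mu_{a,-}^\pi$ and $\mu_{-,b}^\pi$ are right, and closure of rational functions under composition (from~\cite{GNS2000}) finishes it. The paper, however, goes a different way: it writes down a single \emph{synchronous} transducer for $\mu_{\alpha,\beta}^\pi$ directly, with state set $\{0,1\}^m\times\{0,1\}^n$ (where $m=|\alpha|$, $n=|\beta|$), initial state $(\alpha,\beta)$, and transition
\[
\bigl((\delta_1\cdots\delta_m,\epsilon_1\cdots\epsilon_n),\,\delta_{m+1}\epsilon_{n+1}\bigr)\;\longmapsto\;\bigl((\delta_2\cdots\delta_{m+1},\epsilon_2\cdots\epsilon_{n+1}),\,\delta_1\epsilon_1\bigr),
\]
i.e.\ a pair of fixed-length shift registers primed with $\alpha$ and $\beta$. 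This is fully self-contained---no appeal to closure under composition---and the verification is immediate, at the cost of $2^{m+n}$ states. Your approach is more modular and makes the structure (a product of one-step delays) transparent, but imports an external lemma; the paper's buys a shorter, dependency-free proof. Your alternative direct sketch is in the same spirit as the paper's construction, though the paper's choice of two \emph{full-length} queues, rather than a transient phase plus a shorter steady-state buffer, is exactly what keeps its description and verification to a couple of lines.
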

\begin{proof}Suppose that $\alpha \in \{0,1\}$ has length $m$ and $\beta$ has length~$n$.  Consider the transducer $(\alp,\states,\state_0,\transfunc)$ defined as follows:
\begin{itemize}
\item The alphabet $\alp$ is $\{00,01,10,11\}$.\smallskip
\item The state set $\states$ is $\{0,1\}^m\times \{0,1\}^n$.\smallskip
\item The initial state $\state_0$ is $(\alpha,\beta)$.\smallskip
\item The transition function $\transfunc\colon \states\times \alp \to \states \times \alp^*$ is defined by
\[
\transfunc\bigl((\delta_1\cdots\delta_m,\epsilon_1\cdots\epsilon_n),\delta_{m+1}\epsilon_{n+1}\bigr) = \bigl((\delta_2\cdots\delta_{m+1},\epsilon_2\cdots\epsilon_{n+1}),\delta_1\epsilon_1\bigr).
\]
\end{itemize}
It is easy to check that the rational function defined by this transducer is~$\mu_{\alpha,\beta}^\pi$.
\end{proof}

\begin{proposition}If $f\in 2V$, then $f^\pi$ is a rational function.
\end{proposition}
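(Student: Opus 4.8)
The plan is to realize $f^{\pi}$ by splicing together finitely many rational functions of the type supplied by Proposition~\ref{prop:TransducersForParts}, and then assembling them with Proposition~\ref{prop:TransducerSplicing}.

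First I would fix a dyadic subdivision $\sub=\{R(\alpha_1,\beta_1),\dots,R(\alpha_k,\beta_k)\}$ of $C^2$ witnessing that $f\in 2V$, so that $f$ restricts on each $R(\alpha_i,\beta_i)$ to the prefix replacement onto a dyadic rectangle $R(\gamma_i,\delta_i)$. The one preparatory step that requires a small idea is to replace $\sub$ by a refinement in which $\length{\alpha_i}=\length{\beta_i}$ for every~$i$. Indeed, a rectangle $R(\alpha,\beta)$ with $\length{\alpha}<\length{\beta}$ is the disjoint union of the rectangles $R(\alpha\eta,\beta)$ as $\eta$ ranges over $\{0,1\}^{\length{\beta}-\length{\alpha}}$, and on each such piece $f$ still acts by a prefix replacement, namely onto $R(\gamma\eta,\delta)$; the case $\length{\alpha}>\length{\beta}$ is handled symmetrically by subdividing the second coordinate. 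Applying this to each rectangle of $\sub$ produces a new witnessing subdivision, which I may therefore assume has $\length{\alpha_i}=\length{\beta_i}=:\ell_i$ for all $i$. (The corresponding range rectangles $R(\gamma_i,\delta_i)$ need not have equal-length sides, which is harmless.)

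Next I would push this subdivision through $\pi$. Since $\length{\alpha_i}=\length{\beta_i}=\ell_i$, the preimage $\pi^{-1}\bigl(R(\alpha_i,\beta_i)\bigr)$ is precisely the cylinder $I(\rho_i)\subseteq\alp^{\infty}$, where $\rho_i\in\alp^{*}$ is the length-$\ell_i$ word over $\alp=\{00,01,10,11\}$ whose $j$th letter pairs the $j$th letter of $\alpha_i$ with the $j$th letter of~$\beta_i$. Because $\pi$ is a bijection and the $R(\alpha_i,\beta_i)$ partition $C^2$, the cylinders $I(\rho_i)$ partition $\alp^{\infty}$, so $\{\rho_1,\dots,\rho_k\}$ is a complete prefix code over $\alp$. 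Using $\length{\alpha_i}=\length{\beta_i}=\length{\rho_i}$, one checks directly that $\pi(\rho_i\xi)=(\alpha_i\psi,\beta_i\omega)$ whenever $\pi(\xi)=(\psi,\omega)$; applying $f$ gives $(\gamma_i\psi,\delta_i\omega)=\mu_{\gamma_i,\delta_i}(\pi(\xi))$, whence $f^{\pi}(\rho_i\xi)=\mu_{\gamma_i,\delta_i}^{\pi}(\xi)$. By Proposition~\ref{prop:TransducersForParts} each $\mu_{\gamma_i,\delta_i}^{\pi}$ is a rational function, so Proposition~\ref{prop:TransducerSplicing}, applied to the complete prefix code $\{\rho_i\}$ and the rational functions $\mu_{\gamma_i,\delta_i}^{\pi}$, shows that $f^{\pi}$ is rational.

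The only step that is not pure bookkeeping is the refinement in the second paragraph: the content is that every element of $2V$ admits a witnessing subdivision whose domain rectangles have equal-length sides, and it is exactly this that turns their $\pi$-preimages into honest cylinders — hence into a complete prefix code to which the splicing proposition applies. Everything after that is a routine verification that the interleaving map $\pi$ commutes with concatenation in the expected way.
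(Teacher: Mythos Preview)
Your proof is correct and follows essentially the same approach as the paper: refine the domain subdivision so that each rectangle's $\pi$-preimage is a cylinder, then combine the rational maps $\mu_{\gamma_i,\delta_i}^{\pi}$ from Proposition~\ref{prop:TransducersForParts} via Proposition~\ref{prop:TransducerSplicing}. The only cosmetic difference is that the paper refines until all $\alpha_i,\beta_i$ share one common length, whereas you only require $\length{\alpha_i}=\length{\beta_i}$ for each $i$; your version is slightly more parsimonious but the idea is identical.
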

\begin{proof}Let $\{R(\alpha_1,\beta_1),\ldots,R(\alpha_n,\beta_n)\}$ be a dyadic partition of $C^2$ so that $f$ is linear on each $R(\alpha_i,\beta_i)$.  By subdividing if necessary, we may assume that all of the strings $\alpha_1,\ldots,\alpha_n$ and $\beta_1,\ldots,\beta_n$ have the same length.  Let $R(\gamma_i,\delta_i) = f\bigl(R(\alpha_i,\beta_i)\bigr)$ for each~$i$, and let $\epsilon_i$ be the common initial prefix of $\pi\bigl(R(\alpha_i,\beta_i)\bigr)$.  Then $f^\pi$ is given by the formula
\[
f^\pi(\epsilon_i\omega) \;=\; \mu_{\gamma_i,\delta_i}^\pi(\omega)
\]
for each $i\in\{1,\ldots,n\}$ and each $\omega\in\{00,01,10,11\}^*$.  By Proposition~\ref{prop:TransducersForParts}, each of the functions $\mu_{\gamma_i,\delta_i}^\pi$ is  {rational}.  By Proposition~\ref{prop:TransducerSplicing}, it follows that~$f^\pi$ is {rational} as well.
\end{proof}

This completes the proof of Theorem~\ref{thm:SubgroupOfRationalGroup}, and hence Theorems~\ref{thm:MainTheorem2} and~\ref{thm:MainTheorem3}.

\section{Allowing Halting}

In this section, we briefly discuss how to simulate incomplete Turing machines using elements of~$2V$, and we sketch the proofs of some further undecidability results.  Similar results for general piecewise-affine functions can be found in~\cite{Blond}.

\begin{definition}Let $\T$ be a reversible Turing machine, and let $(\state,\tape)$ be a configuration for~$\T$.
\begin{enumerate}
\item We say that $(\state,\tape)$ is a \newword{halting configuration} if there does not exist any configuration $(\stateother,\tapeother)$ such that $(\state,\tape) \to (\stateother,\tapeother)$.\smallskip
\item We say that $(\state,\tape)$ is an \newword{inverse halting configuration} if there does not exist any configuration $(\stateother,\tapeother)$ such that $(\stateother,\tapeother) \to (\state,\tape)$.
\end{enumerate}
A Turing machine that reaches a halting configuration is said to \newword{halt}.
\end{definition}

If $\T$ is incomplete, then the transition function $F\colon \states\times\alp^\Z \to \states\times\alp^\Z$ on the configuration space is only partially defined, and is injective but not surjective.  Specifically, if $\halt$ is the set of halting configurations and $\haltinv$ is the set of inverse halting configurations, then $F$ restricts to a bijection $\halt^c \to \haltinv^c$, where $\halt^c$ and $\haltinv^c$ denote the complements of $\halt$ and $\haltinv$ in the configuration space.

Our construction of the corresponding element $\fT\in 2V$ is only a slight modification of the construction from Section~\ref{sec:TuringMachinesIn2V}.  To start, we subdivide $C^2$ into three dyadic rectangles
\[
R_\zeta = R(0,0),\qquad R_\T = R(-,1),\qquad R_\eta = R(1,0),
\]
We will use $R_\eta$ and $R_\zeta$ for halting and inverse halting, respectively, while $R_\T$ will be used for configurations of the Turing machine.

Now, let $\{\state_1,\ldots,\state_m\}$ denote the states of $\T$, and let $\{I(\sigma_1),\ldots,I(\sigma_m)\}$ be a corresponding subdivision of~$C$, where $\sigma_1,\ldots,\sigma_m\in\{0,1\}^*$.  Similarly, let $\{\letter_1,\ldots,\letter_n\}$ denote the tape symbols for~$\T$, and let $\{I(\alpha_1),\ldots,I(\alpha_n)\}$ be a corresponding a dyadic subdivision of~$C$, where $\alpha_1,\ldots,\alpha_n\in\{0,1\}^*$

Let $\enc\colon \alp^\infty\to \{0,1\}^\infty$ be encoding function derived from $(\alpha_1,\ldots,\alpha_n)$, and let $\Phi\colon \states\times\alp^\Z \to C^2$ be the configuration encoding defined by
\[
\Phi(\state_i,\tape) \;=\; \bigl(\,\sigma_i\,\enc(\tape_L),\;1\,\enc(\tape_R)\,\bigr)
\]
Note that $\Phi$ is no longer surjective---its image is the rectangle $R_\T$. Moreover, note that $\Phi(\halt)$ and $\Phi(\haltinv)$ are each the union of finitely many dyadic rectangles.

Let $\fT$ be any element of~$2V$ that satisfies the following conditions:
\begin{enumerate}
\item $\fT$ agrees with $\Phi\circ F\circ \Phi^{-1}$ on $\Phi(\halt^c)$.\smallskip
\item $\fT$ maps $R_\zeta$ bijectively onto  $R_\zeta \cup \Phi(\haltinv)$.\smallskip
\item $\fT$ maps $\Phi(\halt)\cup R_\eta$ bijectively onto $R_\eta$.
\end{enumerate}
Such an element can be constructed effectively.  For example, to construct the portion of $\fT$ on $R_\zeta$, we need only enumerate the dyadic rectangles $R_1,\ldots,R_k$ of~$\Phi(\haltinv)$, then choose a dyadic subdivision $\sub$ of $R_\zeta$ into $k+1$ rectangles, and finally choose a one-to-one correspondence between the rectangles of $\sub$ and $\{R_1,\ldots,R_k,R_\zeta\}$.

The element $\fT$ constructed above simulates the Turing machine $\T$, in the sense that the following diagram commutes:
\[
    \xymatrix@R=0.4in@C=0.2in@M=0.5em{
    \halt^c\ar[r]^F\ar[d]_\Phi & \haltinv^c\ar[d]^\Phi \\
    \Phi(\halt^c) \ar[r]_{\fT} & \Phi(\haltinv^c)
    }
\]
Whenever a configuration halts, $\fT$ maps the corresponding point to~$R_\eta$.  Similarly, $\fT$ maps points from $R_\zeta$ onto inverse halting configurations to ``fill in'' the bijection.

\begin{example}Consider the incomplete Turing machine $\T$ with two states $\{\state_1,\state_2\}$ and two symbols $\{\letter_1,\letter_2\}$, which obeys the following rules:
\begin{enumerate}
\item From state $\state_1$, move the head right and go to state $\state_2$.\smallskip
\item From state $\state_2$, read the input symbol $\tape(0)$:
\begin{enumerate}
\item If $\tape(0) = \letter_1$, then write $\letter_2$ and go back to state $\state_1$.
\item If $\tape(0) = \letter_2$, then halt.
\end{enumerate}
\end{enumerate}
That is, the transition table for $\T$ is the set
\[
\{(\state_1,\R,\state_2),(\state_2,\letter_1,\state_1,\letter_2)\}
\]
It is easy to check that $\T$ is reversible.  The halting configurations for $\T$ are
\[
\halt = \set{(\state_2,\tape)}{\tape\in\alp^\Z\text{ and }\tape(0) = \letter_2},
\]
and the inverse halting configurations are
\[
\haltinv = \set{(\state_1,\tape)}{\tape\in\alp^\Z\text{ and }\tape(0) = \letter_1}.
\]
To make a corresponding element of $2V$, let $(\sigma_1,\sigma_2)=(0,1)$ and $(\alpha_1,\alpha_2)=(0,1)$.  Then one possible choice for $\fT$ is shown in Figure~\ref{fig:TuringMachineElementHalt}.\label{ex:TuringMachineHalt}
\end{example}
\begin{figure}[t]
\begin{center}
\includegraphics{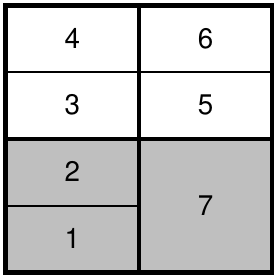}\qquad\includegraphics{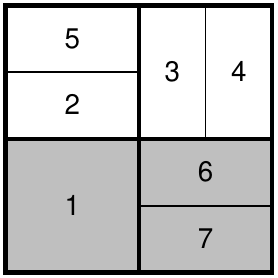}
\caption{The element of $2V$ corresponding to the Turing machine in Example~\ref{ex:TuringMachineHalt}.  The subrectangles $R_\zeta$ and $R_\eta$ are shown in gray, while $R_\T$ is shown in white.}
\label{fig:TuringMachineElementHalt}
\end{center}
\end{figure}

For our purposes, incomplete Turing machines are useful primarily because of their relation to the halting problem.  Before we can discuss this, we must restrict ourselves to a class of configurations that can be specified with a finite amount of information.  First, we fix a \newword{blank symbol} $\letter_1$ from~$\alp$, and we define a configuration $(\state_i,\tape)$ to be \newword{finite} if the tape $\tape$ has the blank symbol in all but finitely many locations.  Then any finite configuration $(\state_i,\tape)$ can be specified using only the state~$\state_i$ and some finite subsequence $\bigl(\tape(-n),\ldots,\tape(n)\bigr)$ of the tape whose complement consists entirely of blank symbols.

\begin{theorem}Any Turing machine can be effectively simulated by a reversible Turing machine.\label{thm:ReversibleUniversal}
\end{theorem}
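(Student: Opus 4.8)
The statement to be proved is Theorem~\ref{thm:ReversibleUniversal}: every Turing machine can be effectively simulated by a reversible one. This is a classical result, essentially due to Bennett, so the plan is to recall the standard construction and organize it so that the output is a reversible Turing machine in the precise sense of this paper (deterministic, with at most one predecessor for every configuration), suitable for feeding into the earlier $2V$-simulation machinery.

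\medskip

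The plan is to follow Bennett's three-pass ``compute, copy, uncompute'' strategy. Starting from an arbitrary deterministic Turing machine $\T_0$ (there is no loss in assuming determinism, since any machine can be determinized, and for the intended application to the halting problem one may as well start with a deterministic machine), I would first put $\T_0$ into a normal form in which each instruction has a distinct label, the machine never re-enters its start state, and moves and writes alternate in a predictable pattern; this is a routine finite manipulation of the transition table. The reversible machine $\T$ will use three tracks on its tape (formally, an enlarged alphabet $\alp \times \alp \times \alp$, with the extra tracks initially blank). \textbf{Pass~1 (compute):} $\T$ simulates $\T_0$ on track~1, but each time $\T_0$ executes an instruction, $\T$ appends the \emph{label of that instruction} to a history string recorded on track~2. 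Because the label records exactly which transition fired, the resulting partial transition function of $\T$ is injective: from the current configuration together with the last symbol of the history, one can read off which instruction was just used and hence invert it. \textbf{Pass~2 (copy):} when $\T_0$ halts, $\T$ copies the output currently on track~1 onto the (blank) track~3. Reversibly copying a string onto a blank track is injective. \textbf{Pass~3 (uncompute):} $\T$ now runs Pass~1 in reverse, using the history on track~2 to undo every step of the original computation, which simultaneously erases track~2, returning track~1 to the original input and track~2 to blank; track~3 retains a copy of the output. The composite machine $\T$ maps (input, blank, blank) to (input, blank, output) and is reversible.

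\medskip

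The technical heart of the argument is verifying that $\T$ is reversible in the exact sense of the Definition in Section~2, i.e.\ that the transition relation of $\T$, viewed on the \emph{full} configuration space $\states\times\alp^\Z$ (not just the ``reachable'' configurations), is both deterministic and co-deterministic. Determinism is built in by construction. For co-determinism, I would check instruction-by-instruction that no two instructions of $\T$ have the same final state, or, where they do, that their effects on the tape are distinguishable by the read symbol at position~$0$ --- this is precisely the combinatorial condition on transition tables referenced after the Definition of reversible machines (``they can be checked directly from the transition table, see \cite{KaOl}''). The label-tracking in Pass~1 is designed exactly so that this holds: two computation steps that would otherwise collide are separated because they append different labels to the history. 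Passes~2 and~3 are handled the same way, with a dedicated block of states whose transitions are individually invertible. Making $\T$ complete is \emph{not} required and indeed not wanted here --- the ambient theorem is placed in the ``Allowing Halting'' section precisely so that $\T$ may halt; what matters is that $\T$ is reversible and that its halting behavior matches that of $\T_0$ (and likewise for inverse halting, since the reverse of Pass~1 from a fresh (input,blank,blank)-style configuration has no predecessor).

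\medskip

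\textbf{Where the difficulty lies.} The construction itself is standard and the bookkeeping is routine; the one place that demands genuine care is the claim of co-determinism over the entire (two-sided, infinite) configuration space rather than over reachable configurations only. In particular one must make sure that the state set of $\T$ is partitioned into a ``forward'' block, a ``copy'' block, and a ``backward'' block in such a way that no backward-block instruction can accidentally serve as the predecessor of a forward-block configuration, and that the transitions reading/writing the history symbol on track~2 never create spurious merges --- e.g.\ a history-erase step and an unrelated history-write step must not land in the same state. I would organize this as a single lemma: ``the transition table of $\T$ has the property that for every state $\stateother$, the instructions with final state $\stateother$ are pairwise incompatible as predecessors,'' and then observe, via the remark cited from \cite{KaOl}, that this combinatorial property implies $\to$ is co-deterministic on $\states\times\alp^\Z$. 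Effectivity is immediate: every step above (determinizing, normalizing, adding tracks, inserting label-tracking, writing the copy and uncompute phases) is a primitive recursive transformation of the transition table, so the map $\T_0 \mapsto \T$ is computable.
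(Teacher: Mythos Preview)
Your proposal is correct and is precisely Bennett's construction, which is exactly what the paper invokes: the paper's own proof is simply a citation to Bennett~\cite{Bennett1973} for the three-tape version and to Morita--Shirasaki--Gono~\cite{MSG} for the one-tape, two-symbol refinement, with no details given. You have supplied the content behind that citation, including the care needed to match the paper's specific definition of reversibility on the full configuration space, which goes beyond what the paper itself records.
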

\begin{proof}This was proven in \cite{Bennett1973} for a $3$-tape reversible Turing machine, and improved to a 1-tape, 2-symbol machine in~\cite{MSG}.
\end{proof}

\begin{corollary}It is not decidable, given a reversible Turing machine and a finite starting configuration, whether the machine will halt.\label{thm:NoHalt}
\end{corollary}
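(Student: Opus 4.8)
The plan is to reduce the classical halting problem to this one by means of Theorem~\ref{thm:ReversibleUniversal}. Recall the classical fact, going back to Turing, that the halting problem for ordinary (not necessarily reversible) Turing machines started on a finite configuration is undecidable; this follows from the existence of a universal Turing machine together with the undecidability of the membership problem for a recursively enumerable but non-recursive set.

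First I would take an arbitrary Turing machine $M$ and an arbitrary finite starting configuration $c$ for $M$, and apply Theorem~\ref{thm:ReversibleUniversal} to produce, effectively, a reversible Turing machine $\T$ and a finite configuration $c'$ of $\T$ that corresponds to $c$ under the simulation. The content of that theorem, as established in~\cite{Bennett1973} and~\cite{MSG}, is that the forward orbit of $c'$ under the transition function of $\T$ faithfully tracks the computation of $M$ on $c$ — after a harmless rescaling of time and enlargement of the alphabet — and, crucially, that $\T$ reaches a halting configuration in the sense of the definition preceding this corollary (a configuration with no outgoing transition) if and only if $M$ halts on $c$. I would also record that the map $c\mapsto c'$ sends finite configurations to finite configurations, so that the reduction stays inside the class of instances addressed by the statement; this is immediate from the explicit constructions cited.

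With this in place the argument concludes formally: the map $(M,c)\mapsto(\T,c')$ is computable, and it transports the undecidable set $\{(M,c) : M \text{ halts on } c\}$ to the set $\{(\T,c') : \T \text{ halts on } c'\}$ in a manner that preserves decidability in both directions. Hence the latter set cannot be decidable, which is precisely the assertion of the corollary.

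The main obstacle here is not deep but is a matter of bookkeeping: one must be certain that ``simulation'' in Theorem~\ref{thm:ReversibleUniversal} is faithful with respect to \emph{halting} as defined in this section, not merely with respect to the partial function computed by $M$. Because a reversible simulator typically runs a compute--copy--uncompute protocol, I would check that when $M$ fails to halt the machine $\T$ also never reaches a halting configuration (it simply runs forever without looping, since it is reversible), and that when $M$ halts the machine $\T$ genuinely comes to rest at a halting configuration rather than entering a cycle. Both points are guaranteed by the constructions of~\cite{Bennett1973,MSG}, so the corollary requires nothing beyond quoting Theorem~\ref{thm:ReversibleUniversal} together with the classical undecidability of halting.
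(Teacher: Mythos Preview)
Your argument is correct and follows the same approach as the paper's own proof, which simply notes that the corollary is immediate from Theorem~\ref{thm:ReversibleUniversal} together with Turing's undecidability of the halting problem. Your additional bookkeeping about the simulation preserving halting behavior and finiteness of configurations is accurate and makes explicit what the paper leaves implicit in the word ``simulate.''
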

\begin{proof}This follows immediately from Theorem~\ref{thm:ReversibleUniversal} and Turing's Theorem on the unsolvability of the halting problem for general Turing machines.
\end{proof}

We wish to interpret this result in the context of $2V$.  We begin by defining points in $C^2$ that correspond to finite configurations.

\begin{definition}A point $(\psi,\omega)\in C^2$ is \newword{dyadic} if $\psi$ and $\omega$ have only finitely many~$1$'s.
\end{definition}

Note that a dyadic point in $C^2$ can be specified with a finite amount of information, namely the initial nonzero subsequences of $\psi$ and~$\omega$.  Assuming the sequence $\alpha_1$ corresponding to the blank symbol $\letter_1$ is a string of finitely many~$0$'s, the dyadic points in $R_\T$ are precisely the points that correspond to finite configurations of~$\T$.

\begin{theorem}It is not decidable, given an element $f\in 2V$\!, a dyadic point $p\in C^2$, and a dyadic rectangle $R\subseteq C^2$, whether the orbit of $p$ under $f$ contains a point in~$R$.
\end{theorem}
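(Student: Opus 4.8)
The plan is to reduce from the halting problem for reversible Turing machines (Corollary~\ref{thm:NoHalt}), using the element $\fT\in 2V$ simulating an incomplete reversible Turing machine~$\T$ that was constructed in this section. Given a reversible Turing machine $\T$ together with a finite starting configuration $(\state,\tape)$, I would first choose the dyadic subdivisions of~$C$ so that the string $\alpha_1$ corresponding to the blank symbol is a string of~$0$'s; then $p := \Phi(\state,\tape)$ is a dyadic point of $C^2$ lying in the rectangle $R_\T$, and it can be written down from the finite data describing $(\state,\tape)$. As noted above, an element $\fT$ with the three stated properties can be constructed effectively, hence expressed as a numbered pattern pair and then (by Proposition~\ref{prop:EffectiveWord}) as a word in the generators of~$2V$. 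The dyadic rectangle $R$ in the statement will simply be the halting rectangle $R_\eta$. The claim to prove is that the $\fT$-orbit of $p$ meets $R_\eta$ if and only if $\T$ halts when started from $(\state,\tape)$. Granting this, the theorem follows at once: the left-hand side is an instance of the decision problem in the statement, while the right-hand side is undecidable by Corollary~\ref{thm:NoHalt}.

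The substance of the argument is to verify this equivalence by tracking which of the three rectangles $R_\zeta$, $R_\T$, $R_\eta$ the orbit of $p$ can occupy. Since $\fT$ agrees with $\Phi\circ F\circ\Phi^{-1}$ on $\Phi(\halt^c)$, the forward iterates $p,\fT(p),\fT^2(p),\ldots$ track the $F$-orbit of $(\state,\tape)$ inside $R_\T$ for exactly as long as that $F$-orbit avoids a halting configuration. If $\T$ halts, let $F^k(\state,\tape)\in\halt$ be the first halting configuration reached; then $\fT^k(p)\in\Phi(\halt)$, and by the third defining property of $\fT$ we get $\fT^{k+1}(p)\in R_\eta$. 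Since $\fT$ maps $\Phi(\halt)\cup R_\eta$ into $R_\eta$, the orbit remains in $R_\eta$ from then on, so it certainly meets $R_\eta$. If instead $\T$ never halts, the forward orbit of $p$ stays in $\Phi(\halt^c)\subseteq R_\T$ for all time and so never meets $R_\eta$.

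To make the conclusion independent of whether ``orbit'' is read as one-sided or two-sided, I would also check that the backward orbit of $p$ never meets $R_\eta$. This is symmetric: $\fT^{-1}$ agrees with $\Phi\circ F^{-1}\circ\Phi^{-1}$ on $\Phi(\haltinv^c)$, so the backward iterates of $p$ stay in $R_\T$ until an inverse halting configuration is encountered, at which point the next application of $\fT^{-1}$ lands in $R_\zeta$ (by the second defining property of $\fT$, which forces $\fT^{-1}\bigl(\Phi(\haltinv)\bigr)\subseteq R_\zeta$) and thereafter remains in $R_\zeta$, since $\fT$ maps $R_\zeta$ onto $R_\zeta\cup\Phi(\haltinv)$. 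Either way the backward orbit is confined to $R_\T\cup R_\zeta$, which is disjoint from $R_\eta$. I expect the only genuine subtlety of the proof — the ``main obstacle'' in what is otherwise a bookkeeping argument — to be exactly this orbit-tracking: confirming that $R_\eta$ functions as an absorbing set entered precisely upon halting, and that neither the forward nor the backward orbit of an honest configuration point can wander into $R_\eta$ for any spurious reason.
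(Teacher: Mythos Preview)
Your proposal is correct and follows essentially the same approach as the paper: reduce from the halting problem for reversible Turing machines by taking $p=\Phi(\state,\tape)$ and $R=R_\eta$, and observe that the forward orbit of $p$ under $\fT$ enters $R_\eta$ if and only if $\T$ halts from $(\state,\tape)$. The paper's proof is a terse two-sentence version of your argument; your orbit-tracking discussion and your treatment of the backward orbit are additional (and correct) details that the paper leaves implicit.
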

\begin{proof}Let $\T$ be a reversible Turing machine, let $(\state_i,\tape)$ be a starting configuration for $\T$, and let $\fT$ be the element constructed above.  Then $\T$ eventually halts starting at $(\state_i,\tape)$ if and only if the point $\Phi(\state_i,\tape)$ eventually maps into the rectangle $R_\eta$ under~$\fT$.  By Theorem~\ref{thm:NoHalt}, we cannot decide whether $\T$ will halt, so the given problem must be undecidable as well.
\end{proof}

\begin{theorem}It is not decidable, given an element $f\in 2V$ \!and two dyadic points $p,q\in C^2$, whether the orbit of $p$ under $f$ converges to~$q$.\label{thm:NoConvergenceCheck}
\end{theorem}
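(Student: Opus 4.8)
The plan is to reduce the halting problem of Corollary~\ref{thm:NoHalt} to the convergence problem, refining the construction of the previous paragraphs so that halting forces the orbit to converge to a prescribed point. Given a reversible Turing machine $\T$ and a finite starting configuration $(\state_i,\tape)$, I would build an element $\fT\in 2V$ exactly as above, but pin down its behaviour on the rectangle $R_\eta=R(1,0)$: I require $\fT$ to act on $R_\eta$ as the prefix replacement $R(1,0)\to R(10,00)$, that is, $\fT(1\psi,0\omega)=(10\psi,00\omega)$ for all $\psi,\omega\in\{0,1\}^\infty$. Since $R(10,00)$ is a proper dyadic subrectangle of $R_\eta$, its complement $R_\eta\setminus R(10,00)$ is a finite union of dyadic rectangles, so $\fT$ can still be completed to a bijection $\Phi(\halt)\cup R_\eta\to R_\eta$ (matching the finitely many rectangles of $\Phi(\halt)$ with those of $R_\eta\setminus R(10,00)$, subdividing as needed, exactly as above); condition~(3) of the construction is thereby met, and the resulting $\fT$ is still an effectively constructible element of $2V$. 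Put $p=\Phi(\state_i,\tape)$, a dyadic point lying in $R_\T$, and let $q=(10^\infty,0^\infty)$, a dyadic point of $R_\eta$ independent of $\T$; note that $q$ is a fixed point of~$\fT$.

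The heart of the argument is the claim that the forward orbit of $p$ under $\fT$ converges to $q$ if and only if $\T$ halts when started from $(\state_i,\tape)$. For the forward direction: by the commuting diagram, while the Turing configuration has not yet halted, the orbit of $p$ agrees with $\Phi\bigl(F^k(\state_i,\tape)\bigr)$ and stays inside $\Phi(\halt^c)\subseteq R_\T$; once $\T$ halts, the orbit point lies in $\Phi(\halt)$, so applying $\fT$ once more lands it in $R_\eta$ by condition~(3), and from then on every iterate is obtained by applying the prefix replacement $R(1,0)\to R(10,00)$. If $N$ is the first time at which $\fT^N(p)\in R_\eta$, writing $\fT^N(p)=(1\psi,0\omega)$, a short induction gives $\fT^{N+k}(p)=(1\,0^k\psi,\;0^{k+1}\omega)$, which converges to $(10^\infty,0^\infty)=q$. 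For the reverse direction: if $\T$ never halts, then $F^k(\state_i,\tape)$ is defined and non-halting for every $k$, so $\fT^k(p)=\Phi\bigl(F^k(\state_i,\tape)\bigr)\in\Phi(\halt^c)\subseteq R_\T$ for all $k$; since $R_\eta$ is a clopen neighbourhood of $q$ disjoint from $R_\T$, no iterate of $p$ lies in $R_\eta$, so the orbit cannot converge to~$q$.

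Combining the two directions, a decision procedure for the convergence problem would decide whether a reversible Turing machine halts from a given finite configuration, contradicting Corollary~\ref{thm:NoHalt}. This proves the theorem, and the undecidability survives even when $q$ is held fixed and only $f$ and $p$ are allowed to vary.

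I do not expect a serious obstacle here: once the behaviour of $\fT$ on $R_\eta$ is chosen correctly, everything reduces to bookkeeping already carried out above. The one delicate point is that choice: $\fT|_{R_\eta}$ must simultaneously be a genuine prefix replacement (so that $\fT$ stays in $2V$), leave room in $R_\eta$ to absorb the finitely many rectangles of $\Phi(\halt)$ while respecting condition~(3), and drive every orbit inside $R_\eta$ to a single dyadic point --- in particular the target $q=(10^\infty,0^\infty)$ must have only finitely many $1$'s in each coordinate, i.e.\ must genuinely be dyadic. The replacement $R(1,0)\to R(10,00)$ meets all of these requirements, and the clopenness of $R_\eta$ together with its disjointness from $R_\T$ is what makes the halt/no-halt dichotomy translate cleanly into convergence versus non-convergence.
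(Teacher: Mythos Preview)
Your proof is correct and follows essentially the same approach as the paper: both fix the action of $\fT$ on $R_\eta$ to be the prefix replacement $(1\psi,0\omega)\mapsto(10\psi,00\omega)$, take $q=(1\overline{0},\overline{0})$, and reduce to Corollary~\ref{thm:NoHalt} via the equivalence ``$\T$ halts from $(\state_i,\tape)$ iff the orbit of $\Phi(\state_i,\tape)$ converges to~$q$.'' You simply fill in more of the details the paper leaves implicit---in particular, checking that condition~(3) can still be met after pinning down $\fT|_{R_\eta}$, and spelling out the non-halting direction via the clopenness of $R_\eta$.
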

\begin{proof}Let $\T$ be a reversible Turing machine, and let $(\state_i,\tape)$ be a starting configuration for $\T$. Let $\fT$ be the element constructed above, with the function $\fT$ chosen so that $\fT(1\psi,0\omega) = \fT(10\psi,00\omega)$ for $(1\psi,0\omega)\in R_\eta$.  Note then that the orbit of every point in $R_\eta$ converges to $(1\overline{0},\overline{0})$.  Therefore, $\T$ eventually halts starting at $(\state_i,\tape)$ if and only if the orbit of $\Phi(\state_i,\tape)$ converges to $(1\overline{0},\overline{0})$.  By Theorem~\ref{thm:NoHalt}, we cannot decide whether $\T$ will halt, so the given problem must be undecidable as well.
\end{proof}

\begin{theorem}There exists an element $f\in 2V$ \!with an attracting dyadic fixed point $p\in C^2$ such that $B(p)\cap D$ is a non-computable set, where $B(p)$ is the basin of attraction of $p$ and $D$ is the set of dyadic points in~$C^2$.
\end{theorem}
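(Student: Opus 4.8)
The idea is that the basin of $p$ will coincide, up to a decidable bookkeeping rectangle, with the set of halting configurations of a universal machine, so its non-computability is inherited from the halting problem. Concretely, the plan is to recycle the construction from the proof of Theorem~\ref{thm:NoConvergenceCheck}, but applied to a universal machine. First I would invoke Theorem~\ref{thm:ReversibleUniversal} to fix a reversible Turing machine $\T=(\states,\alp,\trans)$ that simulates a universal Turing machine; then, writing $H$ for the set of finite configurations of $\T$ from which $\T$ eventually halts, $H$ is not computable (this is the non-uniform version of Corollary~\ref{thm:NoHalt}, obtained from the same effective translation). Choose the dyadic subdivisions $(\sigma_1,\dots,\sigma_m)$ and $(\alpha_1,\dots,\alpha_n)$ as in the construction above, with $\alpha_1$ (the encoding of the blank symbol $\letter_1$) a block of $0$'s, and let $\fT\in 2V$ be the element built there, imposing the additional normalization $\fT(1\psi,0\omega)=\fT(10\psi,00\omega)$ on $R_\eta$ exactly as in the proof of Theorem~\ref{thm:NoConvergenceCheck}. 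Put $p=(1\overline{0},\overline{0})$.

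Second, I would verify that $p$ is an attracting dyadic fixed point. It is dyadic, its first coordinate having a single $1$ and its second none. By the proof of Theorem~\ref{thm:NoConvergenceCheck}, every point of $R_\eta$ has $\fT$-orbit converging to $p$; applying this to $p$ itself and using continuity of $\fT$, letting $n\to\infty$ in $\fT(\fT^{n}p)=\fT^{n+1}p$ gives $\fT(p)=p$. Thus $p$ is a fixed point whose basin $B(p)$ contains the clopen neighborhood $R_\eta$, so $p$ is attracting.

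Third comes the heart of the argument, a reduction of $H$ to $B(p)\cap D$. The key dynamical claim is that for every finite configuration $(\state_i,\tape)$,
\[
(\state_i,\tape)\in H \quad\Longleftrightarrow\quad \Phi(\state_i,\tape)\in B(p)\cap D .
\]
Here $\Phi(\state_i,\tape)$ is always dyadic, since $\tape$ is eventually blank in both directions and $\alpha_1$ is a block of $0$'s. If $(\state_i,\tape)\notin H$, its $F$-orbit stays in $\halt^{c}$, so condition~(1) of the construction keeps the $\fT$-orbit of $\Phi(\state_i,\tape)$ inside $R_\T=R(-,1)$; as $R_\T$ is clopen and disjoint from the neighborhood $R_\eta$ of $p$, this orbit cannot converge to $p$. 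If instead $(\state_i,\tape)$ reaches a halting configuration after $k$ steps, then condition~(3) places $\fT^{k+1}\bigl(\Phi(\state_i,\tape)\bigr)$ in $R_\eta$, whence the orbit converges to $p$. Finally, $\Phi$ restricts to a computable bijection between finite configurations and the dyadic points of $R_\T$, and membership of a dyadic point in $R_\T$ is decidable; hence if $B(p)\cap D$ were a computable subset of $D$ we could decide $H$ by computing $\Phi(\state_i,\tape)$ and testing it against $B(p)\cap D$, contradicting the non-computability of $H$. Therefore $B(p)\cap D$ is not computable.

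I expect the one point needing care to be the forward direction of the dynamical claim — that a non-halting configuration does not accidentally converge to $p$. This rests on the forward invariance of $\Phi(\halt^{c})$ under $\fT$ (condition~(1), together with the fact that along a non-halting orbit $F$ maps $\halt^{c}$ into itself) and on $R_\eta$ being a clopen neighborhood of $p$ disjoint from $R_\T$; once these are isolated the remaining steps are routine, and the status of dyadic points in $R_\zeta$ (which may or may not lie in $B(p)$) is immaterial, since the reduction only touches points in the image of $\Phi$.
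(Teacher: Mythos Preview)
Your proposal is correct and follows exactly the approach the paper takes: fix a computation-universal reversible Turing machine via Theorem~\ref{thm:ReversibleUniversal}, build $\fT$ as in the proof of Theorem~\ref{thm:NoConvergenceCheck} with the same contraction on $R_\eta$, and observe that membership of a dyadic point in the basin of $p=(1\overline{0},\overline{0})$ decides halting for that fixed machine. The paper's own proof is a two-sentence sketch; your version simply unpacks the verification that $p$ is an attracting fixed point and spells out both directions of the reduction, which is entirely in keeping with the intended argument.
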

\begin{proof}
It follows from Theorem~\ref{thm:ReversibleUniversal} that there exists a reversible Turing machine $\T$ that is computation universal, meaning that it can be used to simulate the operation any other Turing machine.  Such a machine has the property that, given a starting configuration $(\state_i,\tape)$, there does not exist an algorithm to determine whether $\T$ halts.  (That is, the halting problem is undecidable in the context of this one machine.)  If we use this machine to construct an element $\fT\in 2V$ as in the proof of Theorem~\ref{thm:NoConvergenceCheck}, then $\fT$ will have the desired property.
\end{proof}

\section*{Acknowledgments}
We would like to thank Matthew Brin, Rostislav Grigorchuk, Conchita Martinez-Perez, Francesco Matucci, Volodia Nekrashevich, and Brita Nucinkis for helpful conversations.

\bigskip

\def\cprime{$'$}

\providecommand{\bysame}{\leavevmode\hbox to3em{\hrulefill}\thinspace}

\providecommand{\MR}{\relax\ifhmode\unskip\space\fi MR}

\bibliographystyle{plain}

\end{document}